\newtheorem{theorem}{Theorem}[section]
\newtheorem{lemma}[theorem]{Lemma}
\newtheorem{corollary}[theorem]{Corollary}
\theoremstyle{definition}
\newtheorem{definition}[theorem]{Definition}
\theoremstyle{remark}
\numberwithin{equation}{section}
\begin{document}
\title{\bf Some Fundamental Theorems of Functional Analysis with Bicomplex and Hyperbolic Scalars}
\author{\textbf{Heera Saini Aditi Sharma and Romesh Kumar}}
\date{\textbf{}}
\vspace{0in}
\maketitle
$\textbf{Abstract.}$ We  discuss some properties of  linear functionals on   topological hyperbolic   and  topological bicomplex   modules. The hyperbolic and bicomplex analogues of the uniform boundedness principle, the open mapping theorem, the closed graph theorem and the  Hahn Banach separation theorem  are  proved.\\\\
 $\textbf{Keywords.}$  Bicomplex modules, hyperbolic modules,  topological bicomplex modules, topological hyperbolic modules,   hyperbolic convexity,  hyperbolic-valued Minkowski functionals, continuous  linear functionals, Baire category theorem,  uniform boundedness principle, open mapping theorem, inverse mapping theorem, closed graph theorem, Hahn Banach separation theorem.

\begin{section} {Introduction and Preliminaries}
 \renewcommand{\thefootnote}{\fnsymbol{footnote}}
\footnotetext{2010 {\it Mathematics Subject Classification}. 
30G35, 46A22, 46A30.}
In this paper, we prove several core results of bicomplex functional analysis. The four basic principles, the Hahn Banach separation theorem for topological hyperbolic and topological bicomplex modules, the open mapping theorem, the closed graph theorem and the uniform boundedness principle for   $F$-bicomplex and   $F$-hyperbolic modules with hyperbolic-valued norm have been presented. Hahn Banach theorem is one of the most fundamental results in functional analysis. The theorem for real vector spaces was first proved by H. Hahn \cite{HH}, rediscovered in its present, more general form by S. Banach \cite{BA}. Further, it was generalized to complex vector spaces by H. F. Bohnenblust and A. Sobczyk \cite{BS}; and Soukhomlinoff \cite{souk} who also considered the case of vector spaces with quaternionic scalars. The Hahn-Banach theorem for bicomplex functional analysis with real-valued norm was proved in \cite{LL}. Recently, Hahn Banach theorem for bicomplex functional analysis with hyperbolic-valued norm was proved in \cite{Hahn}, which is in analytic form, involving the existence of extensions of a linear functionals. The uniform boundedness principle was proved by Banach and Steinhaus \cite{BAHS}.  Some related results to this principle were already proved by Lebesgue \cite{HL}, Hahn \cite{HH0}, Steinhaus \cite{stein}, and Saks and Tamarkin \cite{sktm}. The open mapping theorem was first proved in 1929 by Banach \cite{BA}. Later, in 1930, a different proof was given by Schauder \cite{schau}.  There is a vast literature on generalizations of Hahn Banach theorem, the open mapping theorem and the uniform boundedness principle in different directions.  

The main idea of the paper is to extend the work of  \cite{Hahn}  to the more general case of topological hyperbolic  and  topological bicomplex  modules. Both \cite{LL} and \cite{Hahn} have presented the Hahn Banach theorem in analtyic form with real-valued and hyperbolic-valued norm respectively. We are interested to present this result in its geometric form which has been done in Section 6, 7 and 8. 

The work in this paper is organized in the following manner.  In Section 2, we give  a brief discussion on  topological hyperbolic  modules.  Section 3 discusses some properties of  linear functionals on  topological hyperbolic   and  topological bicomplex  modules. In Section 4, we first prove the  Baire category theorem for  $\mathbb{D}$-metric spaces, which is then used to prove the uniform boundedness principle  for   $F$-bicomplex and   $F$-hyperbolic modules. The open mapping theorem, the inverse mapping theorem and the closed graph theorem   for  $F$-hyperbolic and  $F$-bicomplex modules have been proved in Section 5. In Sections 6 and 7, we  present a geometric form of Hahn Banach theorem  for  topological hyperbolic   and  topological bicomplex  modules and we call them hyperbolic Hahn Banach separation theorem and bicomplex  Hahn Banach separation theorem respectively. Section 8 deals with another geometric form of Hahn Banach theorem for hyperbolic  modules in terms of hyperbolic hyperplanes.

Now, we summarize some basic properties of bicomplex numbers. The set  $\mathbb{BC}$ of bicomplex numbers  is defined as 
$$
\mathbb{BC} =\left\{Z=w_{1}+jw_{2}\;|\;w_1 ,w_2 \in \mathbb{C}(i)\right\},
$$
where $i$ and $j$ are imaginary units such that $ ij=ji,  i^2=j^2=-1$ and $\mathbb{C}(i)$ is the set of complex numbers with the imaginary unit $i$. The set $\mathbb{BC}$ of bicomplex numbers  forms a  ring under the usual addition and multiplication of bicomplex numbers. 
Moreover,  $\mathbb{BC}$ is a module over itself.
The product of imaginary units $i$ and $j$ defines a hyperbolic unit $k$ such that $k^2=1$. The product of all units is commutative and satisfies $$ij=k, ~~ik=-j~~ \textmd{and}~~ jk=-i.$$
The set $\mathbb{D}$  of hyperbolic numbers  is defined as
$$\mathbb{D}=\left\{ \alpha = \beta_1 +k \beta_2 : \beta_1 , \beta_2 \in \mathbb{R} \right\}.$$   The set $\mathbb{D}$  is a ring and a  module over itself.
Since the set $\mathbb{BC}$ has two imaginary units $i$ and $j$, two conjugations can be defined for bicomplex numbers and composing these two conjugations, we obtain a third one. We define these conjugations as follows:
\begin{enumerate}
\item[(i)] $Z^{\dagger_1}=\overline{w_1}+j\overline{w_2},$ 
\item[(ii)] $Z^{\dagger_2}={w_1}-j{w_2},$ 
\item[(iii)] $Z^{\dagger_3}= \overline{w_1}-j\overline{w_2},$
\end{enumerate}
where $\overline{w_1}, \overline{w_2}$ are   respectively the usual complex conjugates of ${w_1}, {w_2} \in \mathbb C(i)$.  For bicomplex numbers we have  the following three moduli:
\begin{enumerate}
\item[(i)] $|Z|^2_i=Z\;.\;Z^{\dagger_2}={w_1}^2+{w_2}^2 \in \mathbb{C}(i),$
\item[(ii)] $|Z|^2_j=Z\;.\;Z^{\dagger_1}= (|{w_1}|^2-|{w_2}|^2) + j(\;2\;\mathrm{Re}({w_1}\;\overline{{w_2}})) \in \mathbb{C}(j),$  
\item[(iii)] $|Z|^2_k=Z\;.\;Z^{\dagger_3}=(|{w_1}|^2+|{w_2}|^2) +k(\;-2\;\mathrm{Im}({w_1}\;\overline{{w_2}})) \in \mathbb{D}.$ 	 
\end{enumerate}
 The hyperbolic numbers $e_1$ and $e_2$  defined as
 $$e_1= \frac{1+k}{2} \;\; \textmd{and}\;\;\ e_2=\frac{1-k}{2}\;,$$
 are  linearly independent in the $\mathbb{C}$(i)- vector space $\mathbb{BC}$ and satisfy the following properties:
 $${e_1}^2=e_1,\;\;\; {e_2}^2=e_2,\;\; {e_1}^{\dagger_{3}}= e_1 , \;\;\;{e_2}^{\dagger_{3}}= e_2 ,\;\;\; e_1+e_2=1\;\;\; e_1\cdot e_2=0\;.$$ Any bicomplex number $Z={w_1}+j{w_2}$ can be uniquely written as 
\begin{equation}\label{eq1000} Z=e_1z_1+e_2z_2 \;,
\end{equation}  where $z_1=w_1 -iw_2 $ and $z_2=w_1 +iw_2 $ are elements of  $\mathbb{C}(i)$. Formula (\ref{eq1000}) is called the idempotent representation of a bicomplex number $Z$.
The sets $e_1\mathbb {BC}$ and $e_2\mathbb {BC}$ are ideals in the ring $\mathbb {BC}$ such that $$e_1\mathbb {BC} \; \cap \;e_2\mathbb {BC}= \left\{0\right\}$$ and 
\begin{equation}\label{eq1001} \mathbb {BC}= e_1\mathbb {BC}+e_2\mathbb {BC}\;.
\end{equation} Formula  (\ref{eq1001}) is called the idempotent decomposition of $\mathbb {BC}$. \\
For bicomplex numbers, $Z=e_1 z_1+e_2 z_2$ and  $Z'=e_1z'_1+e_2z'_2$, the product $Z\cdot Z'=1$ if and only if $e_1 z_1 z'_1+e_2 z_2 z'_2= e_1 +e_2$. We, therefore, have  $Z\cdot Z'=1$  if and only if $ z_1 z'_1 =1$ and $z_2 z'_2=1.$ Thus, $Z$ is invertible if and only if both $z_1$ and $z_2$ are non-zero and the inverse $Z^{-1}$ is equal to $e_1z^{-1}_1+e_2z^{-1}_2$. A nonzero bicomplex number $Z=e_1 z_1+e_2 z_2$  does not have an inverse if either $z_1$ is zero or $z_2$ is zero and such a $Z$ is called a zero divisor.  The set $\mathcal {NC}$ of all zero divisors  of $\mathbb {BC}$ is, thus, given by
 $$\mathcal {NC} =\left\{Z=e_1 z_1+e_2 z_2 \;|\; z_1 =0\; \textmd{or}\; z_2 =0 \right\},$$ and is called the null cone.\\
 The hyperbolic-valued or $\mathbb D$-valued norm $|Z|_k$ of a bicomplex number $Z=e_1z_1+e_2z_2$ is defined as  $$|Z|_k=e_1|z_1|+e_2|z_2|.$$ \\
A hyperbolic number $\alpha = \beta_1 +k \beta_2 $ in idempotent representation  can be  written as 
$$ \alpha = e_1 \alpha_1 + e_2 \alpha_2, \;\;\; $$
where $\alpha_1 = \beta_1 + \beta_2 $ and $\alpha_2 = \beta_1 - \beta_2 $ are real numbers. We say that $\alpha$ is a positive hyperbolic number if $\alpha_1  \geq 0$ and $ \alpha_2 \geq 0 .$ Thus, the set of positive hyperbolic numbers $\mathbb{D}^{+}$ is given by
$$\mathbb{D}^{+} = \{ \alpha = e_1 \alpha_1 + e_2 \alpha_2 : \;\; \alpha_1 \geq 0,  \alpha_2 \geq 0\}.$$
For $\alpha, \gamma \in \mathbb{D},$ define a relation $\leq^{'}$ on $\mathbb{D}$ by $\alpha \; \leq^{'}\; \gamma$ whenever $\gamma - \alpha \in \mathbb{D}^{+}$. This relation is reflexive, anti-symmetric as well as transitive and hence defines a partial order on $\mathbb{D}.$   If we write the hyperbolic numbers $\alpha$ and $\gamma$ in idempotent representation as  $\alpha = e_1 \alpha_1 + e_2 \alpha_2 $ and  $\gamma = e_1 \gamma_1 + e_2 \gamma_2$, then $\alpha \; \leq^{'}\; \gamma$ implies that  $ \alpha_1  \leq \gamma_1$ and $ \alpha_2 \leq \gamma_2. $  By  $\alpha \; <' \; \gamma$, we mean  $ \alpha_1  < \gamma_1$ and $ \alpha_2 < \gamma_2. $ For further details on partial ordering on $\mathbb{D}$ one can refer \cite [Section 1.5] {YY}. \\
If $A \subset \mathbb{D}$ is $ \mathbb{D}$-bounded from  above, then the $\mathbb{D}$-supremum of $A$ is defined as the smallest member of the set of all upper bounds of $A$. In other words,  a hyperbolic number 
$\lambda$ is an upper bound of the set $A$, can be described by the following two properties:
\begin{enumerate}
\item[(i)] $\alpha \leq' \lambda$, for each $\alpha \in A$.
\item[(ii)]  For any $\epsilon >'0$, there exists $\beta \in A$ such that $\beta >' \lambda - \epsilon$.
\end{enumerate}
That is, the hyperbolic number $\lambda = e_1 \lambda_1 + e_2 \lambda_2 $, where $\lambda_1$ and $\lambda_2$ are real numbers, is the $\mathbb{D}$-supremum of $A$ if 
\begin{enumerate}
\item[(i)] $e_1 \alpha_1+ e_2 \alpha_2 \leq'e_1 \lambda_1 + e_2 \lambda_2$, for each $\alpha = e_1 \alpha_1+ e_2 \alpha_2 \in A$.
\item[(ii)]  For any $\epsilon= e_1 \epsilon_1 + e_2 \epsilon_2  >' 0$, there exists $\beta = e_1 \beta_1 + e_2 \beta_2  \in A$ such that $e_1 \beta_1 + e_2 \beta_2 >' e_1 \lambda_1 -  e_1 \epsilon_1 + e_2 \lambda_2 - e_2 \epsilon_2 $.
\end{enumerate} 
Let $A_1 = \{\lambda_1 \; : \; e_1 \lambda_1  + e_2 \lambda_2  \in A\}$  and  $A_2 = \{\lambda_2 \; : \; e_1 \lambda_1  + e_2 \lambda_2  \in A\}$ . 
Then, $ \alpha_1 \leq  \lambda_1 $ for each $\alpha_1 \in A_1$,   $ \alpha_2 \leq \lambda_2$ for each $\alpha_2 \in A_2$   and for $\epsilon_1>0$, $\epsilon_2 >0$, there exists $ \beta_1 \in A_1$ and $ \beta_2  \in A_2$ such that $\beta_1  > \lambda_1 - \epsilon_1 $ and $\beta_2  > \lambda_2 - \epsilon_2 $, showing that $\lambda_1$ is the supremum of $A_1 $  and $\lambda_2$ is the supremum of $A_2 $ and hence we obtain
 $$ \sup_{\mathbb{D}}A = \sup A_1 e_1 +  \sup A_2 e_2,$$  Similarly, $\mathbb{D}$-infimum of  a  $ \mathbb{D}$-bounded below set $A$ is defined as
$$ \inf_{\mathbb{D}} A = \inf A_1 e_1 +  \inf A_2 e_2,$$ where $A_1$ and $A_2$ are as defined above.
\\
A $\mathbb{B}\mathbb{C}$-module (or $\mathbb{D}$-module) $X$ can be written as
\begin{equation}\label{eq1002} X=e_1X_1+e_2X_2\;,
\end{equation} where $X_1=e_1X$ and $X_2=e_2X$ are $\mathbb{C}(i)$-vector  (or $\mathbb{R}$-vector) spaces. Formula (\ref{eq1002}) is called the idempotent decomposition of $X$. Thus, any $x$ in $X$ can be uniquely written as $x=e_1x_1+e_2x_2$ with $x_1\in X_1,\;x_2\in X_2$.
\end{section}

 \begin{section} {Topological  Hyperbolic Modules} 
Topological bicomplex modules have been defined and discussed thoroughly in \cite{HS}.  In  this section, we introduce the concepts of   topological  hyperbolic modules and absorbedness  in hyperbolic modules. Hyperbolic convexity and hyperbolic-valued Minkowski functionals  in  hyperbolic modules have already  been defined in \cite{Hahn}.  The proofs of all the theorems in this section  are  on similar lines as  of their bicomplex counterparts, (see,  \cite [Section 2] {HS}), so we omit them.
\begin{definition}\label{d1}
Let $X$ be a $\mathbb{D}$-module and $\tau$ be a Hausdorff topology on $X$ such that the operations
\begin{enumerate}
\item[(i)] $+\; : X \times X \longrightarrow X$  and
\item[(ii)] $ \cdot\; : \mathbb{D} \times X \longrightarrow X$
\end{enumerate}
are continuous. Then the pair $(X, \tau )$ is called a topological hyperbolic module or topological $\mathbb{D}$-module.
\end{definition}  Let $(X, \tau )$ be a topological $\mathbb{D}$-module. Write $$X= e_1 X_1 + e_2 X_2, $$ where  $X_1=e_1X$ and $X_2=e_2X$  are $\mathbb{R}$-vector spaces as well as  $\mathbb{D}$-modules . Consider $X_l$ to be  $\mathbb{R}$-vector space, for $l=1,2$. Then 
  $\tau_l = \{ e_l G \; : \; G \in \tau \}$  
is a Hausdorff topology on $X_l$  such that the operations
\begin{enumerate}
\item[(i)] $+\; : X_l \times X_l \longrightarrow X_l$  and
\item[(ii)] $ \cdot\; : \mathbb{R} \times X_l \longrightarrow X_l$
\end{enumerate}
are continuous for $l= 1, 2$. Therefore, $(X_l, \tau_l)$ is a topological $\mathbb{R}$-vector space for $l=1, 2.$ Similarly,  $(X_l, \tau_l)$ becomes a topological $\mathbb{D}$-module, when $X_l$ is considered to be  $\mathbb{D}$-module, for $l=1, 2.$ 

\begin{definition}\label{d2}  \cite [Definition 17]{Hahn} 
Let $B$ be a subset of a $\mathbb{D}$-module $X$. Then $B$ is called a $\mathbb{D}$-convex set if  $x, y \in X$ and $\lambda \in \mathbb{D}^{+}$ satisfying $0 \leq ' \lambda \leq ' 1$ implies that $\lambda x + (1- \lambda) y \in B.$
\end{definition}
\begin{theorem} \label{Hb1}
Let $B$ be a  $\mathbb{D}$-convex subset of $\mathbb{D}$-module  $X$. Then $B$ can be written as  $$B \; =\; e_1 B + e_2 B.$$
\end{theorem}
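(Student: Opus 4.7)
The plan is to prove the identity $B = e_1 B + e_2 B$ by establishing both set inclusions, using only the idempotent decomposition $1 = e_1 + e_2$ together with the definition of $\mathbb{D}$-convexity.

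For the inclusion $B \subseteq e_1 B + e_2 B$, I would take any $x \in B$ and write $x = e_1 x + e_2 x$, since $e_1 + e_2 = 1$. This immediately gives $x \in e_1 B + e_2 B$ by choosing the same representative in each summand. No convexity is needed here, only the idempotent structure.

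The reverse inclusion $e_1 B + e_2 B \subseteq B$ is where the convexity hypothesis does the work. I would pick an arbitrary element $e_1 b_1 + e_2 b_2$ with $b_1, b_2 \in B$ and set $\lambda = e_1$. The key verification is that $\lambda$ satisfies the hypotheses of Definition~\ref{d2}: in idempotent form $e_1 = 1 \cdot e_1 + 0 \cdot e_2$, so its components are $1$ and $0$, both nonnegative, giving $e_1 \in \mathbb{D}^{+}$ and hence $0 \leq' e_1$. Similarly $1 - e_1 = e_2 = 0 \cdot e_1 + 1 \cdot e_2 \in \mathbb{D}^{+}$, which gives $e_1 \leq' 1$. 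Thus $0 \leq' e_1 \leq' 1$, so $\mathbb{D}$-convexity of $B$ applied to $b_1, b_2 \in B$ with $\lambda = e_1$ yields
\begin{equation*}
\lambda b_1 + (1 - \lambda) b_2 \;=\; e_1 b_1 + e_2 b_2 \;\in\; B.
\end{equation*}

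I do not foresee any real obstacle; the proof is essentially a one-line application of $\mathbb{D}$-convexity once we recognize that the idempotents $e_1$ and $e_2$ themselves form a valid pair of convex-combination weights in the hyperbolic partial order. The only conceptual point to be careful about is not to confuse the element $1 \in \mathbb{D}$ (the ring unit) with the real number $1$ when verifying $e_1 \leq' 1$; this is handled transparently via the idempotent representation described after Definition~\ref{d2}.
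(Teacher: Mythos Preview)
Your argument is correct: the forward inclusion is immediate from $1 = e_1 + e_2$, and the reverse inclusion follows exactly as you say by taking $\lambda = e_1$ (so $1-\lambda = e_2$) as a legitimate $\mathbb{D}$-convex weight. The paper actually omits the proof of this theorem, pointing instead to the bicomplex analogue in \cite{HS}; your proof is precisely the expected one and matches that approach.
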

\begin{definition}\label{d3}
Let $B$ be a subset of a $\mathbb{D}$-module $X$. Then $B$ is called a $\mathbb{D}$-absorbing set if for each $x \in X$, there exists $ \epsilon > ' 0$ such that $\lambda x \in B$ whenever  $ 0 \; \leq ' \;  \lambda \;  \leq ' \;  \epsilon$.
\end{definition}
\begin{theorem} \label{Hb2}
Let  $(X, \tau ) $ be a topological  $\mathbb{D}$-module. Then each neighbourhood of $0$ in $X$ is $\mathbb{D}$-absorbing.
\end{theorem}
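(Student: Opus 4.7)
The plan is to use continuity of the scalar multiplication $\cdot : \mathbb{D} \times X \to X$ from Definition \ref{d1} at the point $(0, x)$ to pull a neighborhood $U$ of $0 \in X$ back to a neighborhood of $0 \in \mathbb{D}$, and then inscribe inside this $\mathbb{D}$-neighborhood a positive hyperbolic interval of the form $\{\lambda \in \mathbb{D} : 0 \leq' \lambda \leq' \epsilon\}$. So fix a neighborhood $U$ of $0$ in $X$ and an arbitrary $x \in X$; the goal is to exhibit $\epsilon >' 0$ such that $\lambda x \in U$ whenever $0 \leq' \lambda \leq' \epsilon$, which is exactly the condition in Definition \ref{d3}.

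For the first step, since $(\lambda, y) \mapsto \lambda y$ is continuous and sends $(0, x)$ to $0 \in U$, there exist a neighborhood $V$ of $0$ in $\mathbb{D}$ and a neighborhood $W$ of $x$ in $X$ with $V \cdot W \subset U$; specializing to $y = x$ gives $\lambda x \in U$ for every $\lambda \in V$. The second step is to locate an $\epsilon >' 0$ with $\{\lambda : 0 \leq' \lambda \leq' \epsilon\} \subset V$. Using the idempotent identification $\mathbb{D} = e_1 \mathbb{R} + e_2 \mathbb{R}$, a base at $0 \in \mathbb{D}$ is given by the $\mathbb{D}$-balls $\{\alpha \in \mathbb{D} : |\alpha|_k <' \delta\}$ with $\delta = e_1 \delta_1 + e_2 \delta_2 >' 0$, since $|\alpha|_k <' \delta$ unpacks to $|\alpha_1| < \delta_1$ and $|\alpha_2| < \delta_2$. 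Choose such a $\delta$ so that this ball lies in $V$, and set $\epsilon = e_1 (\delta_1/2) + e_2 (\delta_2/2)$, which satisfies $\epsilon >' 0$. For any $\lambda = e_1 \lambda_1 + e_2 \lambda_2$ with $0 \leq' \lambda \leq' \epsilon$, the idempotent components obey $0 \leq \lambda_l \leq \delta_l/2 < \delta_l$ for $l = 1, 2$, hence $|\lambda|_k <' \delta$, so $\lambda \in V$ and finally $\lambda x \in U$.

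The main (and essentially only) obstacle is being confident about the shape of neighborhoods at $0 \in \mathbb{D}$; once one accepts that the hyperbolic balls form such a base, which is immediate from identifying $\mathbb{D}$ with $\mathbb{R}^2$ via the idempotent decomposition, the rest is a direct transcription of the classical topological vector space argument, mirroring its bicomplex counterpart in \cite{HS}.
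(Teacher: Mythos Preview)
Your argument is correct: continuity of scalar multiplication at $(0,x)$ yields a $\mathbb{D}$-neighbourhood $V$ with $V\cdot x\subset U$, and the idempotent identification $\mathbb{D}\cong\mathbb{R}^2$ lets you inscribe an order interval $\{\lambda:0\le'\lambda\le'\epsilon\}$ inside $V$. The paper omits the proof entirely and simply refers to its bicomplex counterpart in \cite{HS}; since you are explicitly transcribing that same classical topological-vector-space argument into the hyperbolic setting, your approach is precisely what the paper intends.
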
 
\begin{definition}\label{d4}
Let $B$ be a $\mathbb{D}$-convex, $\mathbb{D}$-absorbing subset of a $\mathbb{D}$-module $X$. Then, the  mapping $q_B \; : \;  X \longrightarrow \mathbb{D}^{+}$ defined by $$ q_B (x) \; = \; \inf_{\mathbb{D}} \{ \alpha >' 0 \; : \; x \in \alpha B  \}, \;\;\; \textmd{for each} \; x \in X,$$ is called hyperbolic-valued  gauge or  hyperbolic-valued Minkowski functional of $B$. 
\end{definition}
For each $x \in X$, we can write $q_B(x) = q_{e_1 B}( e_1 x) e_1 +  q_{e_2 B}( e_2 x) e_2$, where $q_{e_1 B}$ and    $q_{e_2 B}$ are real valued Minkowski functionals on $ e_1 X$ and  $ e_2 X$ respectively. For further details on  hyperbolic-valued Minkowski functionals one can see  \cite{HS}  and \cite{Hahn}.

\end{section} 


\begin{section} {Bicomplex and Hyperbolic Linear Functionals } 
In this section we give some properties of linear functionals on topological $\mathbb{BC}$   and topological   $\mathbb{D}$-modules.

 Suppose $X$ is a $\mathbb{BC}$-module. Then $X$ is also a module over the ring of hyperbolic numbers, i.e.,  $X$ is also a $\mathbb{D}$-module. An additive functional  $f$ on $X$ is said to be $\mathbb{D}$-linear if $f(\alpha x) = \alpha f(x)$ for each $x \in X$ and each $\alpha \in \mathbb{D}$; and $\mathbb{BC}$-linear if $f(\alpha x) = \alpha f(x)$ for each $x \in X$ and each $\alpha \in \mathbb{BC}.$

Let $X$ be a topological  $\mathbb{BC}$-module (or $\mathbb{D}$-module) and $f$ be a  $\mathbb{BC}$-functional (or $\mathbb{D}$-functional) on $X$. Then $f$ is said to be continuous at $x \in X$ if for each $\epsilon >' 0$, there exists a neighbourhood $V \subset X$ of $x$ such that $|f(y)- f(x)|_k < ' \epsilon$ for each $y \in V$. $f$ is said to be continuous on $X$ if $f$ is  continuous at each $x \in X$.\\

 Let $X$ be a $\mathbb{BC}$-module (or $\mathbb{D}$-module) and  $f$ be a $\mathbb{BC}$-linear (or $\mathbb{D}$-linear)  functional on $X$. Then there exist $\mathbb{C}(i)$-linear (or   $\mathbb{R}$-linear)   functionals $f_1$ and $f_2$  on $X$ such that $$ f(x)=e_1 f_1 ( x) + e_2 f_2 ( x), \;\;\; \textmd{for each} \;  x \in X.$$  Since $f$ is $\mathbb{BC}$-linear (or $\mathbb{D}$-linear), for each $x \in X$, we have
\begin{eqnarray*}
f(x) &=& f(e_1 x+e_2 x)\\
&=& e_1 f(e_1 x) + e_2 f (e_2 x)\\
&=& e_1 f_1 (e_1 x) + e_2 f_2 (e_2 x).
\end{eqnarray*}
That is, \begin{equation}\label{Eqhb1}
f(x) = e_1 f_1 (e_1 x) + e_2 f_2 (e_2 x),
\end{equation}
for each $x \in X$. This leads to the following theorem:

\begin{theorem}\label{Thhb1} Let $X$ be a topological $\mathbb{BC}$-module (or $\mathbb{D}$-module) and $f$ be a $\mathbb{BC}$-linear (or $\mathbb{D}$-linear) functional on $X$. Then the following statements are equivalent:
\begin{enumerate}
\item[(i)] $f$ is  continuous  if and only if ${f_1}_{|X_1}$  and ${f_2}_{|X_2}$ are  continuous.
\item[(ii)] $f$ is  $\mathbb{D}$-bounded in some neighbourhood of $0$   if and only if ${f_1}_{|X_1}$  and ${f_2}_{|X_2}$ are bounded in some neighbourhoods of $0$.
\end{enumerate}
\end{theorem}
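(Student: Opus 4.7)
The plan is to establish each of the two biconditionals (i) and (ii) by working componentwise in the idempotent decomposition, exploiting two facts: the hyperbolic modulus splits as $|f(y)|_k = e_1|f_1(e_1 y)| + e_2|f_2(e_2 y)|$ (from \eqref{Eqhb1} together with the definition $|Z|_k = e_1|z_1|+e_2|z_2|$), and the partial order $<'$ reduces to coordinatewise inequalities in $e_1,e_2$. Throughout I would use that neighbourhoods of $0$ in $X$ have a base of sets of the form $e_1 V_1 + e_2 V_2$, with $V_l$ a $\tau_l$-neighbourhood of $0$ in $X_l$ --- this is immediate from the definition $\tau_l = \{e_l G : G\in\tau\}$ and the joint continuity of addition and scalar multiplication, and is the analogue for $\mathbb{D}$-modules of the result cited from \cite{HS}.

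For the forward direction of (i), I would assume $f$ is continuous at $0$ and fix an arbitrary $\epsilon_l>0$ in $\R$ for $l=1,2$. Applying continuity at $\epsilon := e_1\epsilon_1 + e_2\epsilon_2 >' 0$, there is $V\in\tau$ with $0\in V$ and $|f(y)|_k <' \epsilon$ for all $y\in V$; decomposing this inequality componentwise gives $|f_1(e_1 y)| < \epsilon_1$ and $|f_2(e_2 y)| < \epsilon_2$ for all $y\in V$, so $e_l V$ is a $\tau_l$-neighbourhood of $0$ on which $f_l|_{X_l}$ is controlled by $\epsilon_l$, yielding continuity of $f_l|_{X_l}$ at $0$. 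Linearity then upgrades continuity at $0$ to continuity everywhere. For the converse, given $\epsilon = e_1\epsilon_1 + e_2\epsilon_2 >' 0$, pick $\tau_l$-neighbourhoods $V_l$ of $0$ with $|f_l(x_l)|<\epsilon_l$ on $V_l$; then $V := e_1 V_1 + e_2 V_2$ is a $\tau$-neighbourhood of $0$, and every $y \in V$ writes as $y = e_1 x_1 + e_2 x_2$ with $x_l \in V_l$, so $|f(y)|_k = e_1 |f_1(x_1)| + e_2|f_2(x_2)| <' \epsilon$.

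Part (ii) would proceed in exactly the same spirit. If $|f(y)|_k \leq' M$ for some $M = e_1 M_1 + e_2 M_2 \in \mathbb{D}^+$ and all $y$ in a neighbourhood $V$ of $0$ in $X$, the identity for $|f(y)|_k$ projects this to $|f_l(e_l y)| \leq M_l$ on $e_l V$, giving real-valued boundedness of each $f_l|_{X_l}$ on a neighbourhood of $0$. The converse assembles $V = e_1 V_1 + e_2 V_2$ from neighbourhoods on which the restrictions are bounded by $M_1, M_2$, and reads off $|f(y)|_k \leq' e_1 M_1 + e_2 M_2$ on $V$.

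The main conceptual obstacle is not the arithmetic but making sure the topological bookkeeping is airtight: specifically, that the basis description of $\tau$ in terms of $\tau_1,\tau_2$ really holds in the hyperbolic setting (not just the bicomplex setting of \cite{HS}), and that $e_l V$ and $e_1 V_1 + e_2 V_2$ are genuine neighbourhoods of $0$ in the relevant topologies. Once that is in hand, everything else is a direct componentwise translation through \eqref{Eqhb1} and the idempotent formula for $|\cdot|_k$.
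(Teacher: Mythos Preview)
The paper does not actually supply a proof of this theorem; it is stated immediately after the decomposition formula \eqref{Eqhb1} and is evidently regarded as a direct consequence of that formula together with the structural facts about topological $\mathbb{BC}$- and $\mathbb{D}$-modules imported from \cite{HS}. Your componentwise argument via the idempotent splitting of $|\cdot|_k$ and of the order $<'$ is precisely the natural route, and it is correct.

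The one point you flag as needing care is indeed the only nontrivial step: that sets $V_1 + V_2$ with $V_l \in \tau_l$ a neighbourhood of $0$ form a base of $\tau$-neighbourhoods of $0$. For the record, this can be checked directly. Writing $V_l = e_l G_l$ with $G_l \in \tau$ and $0 \in G_l$, one has
\[
e_1 G_1 + e_2 G_2 \;=\; \{z : e_1 z \in e_1 G_1\}\cap\{z : e_2 z \in e_2 G_2\} \;=\; (G_1 + X_2)\cap(G_2 + X_1),
\]
which is $\tau$-open; conversely, continuity of $(x,y)\mapsto e_1 x + e_2 y$ at $(0,0)$ produces, inside any given $\tau$-neighbourhood $G$ of $0$, a set $e_1 U + e_2 U$ with $U\in\tau$, $0\in U$. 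With that in hand, both biconditionals go through exactly as you outline.
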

\begin{proof} $(i)$ Suppose $f$ is continuous. Let $x_1 \in  X_1 = e_1 X, \;  x_2 \in X_2 = e_2 X$ and $\epsilon_1, \epsilon_2 > 0$ be given. Then there exists $x \in X$ such that $x_1 = e_1 x$ and $x_2 = e_2 x$. Set $\epsilon = e_1 \epsilon_1 + e_2 \epsilon_2$. Since $f$ is continuous at $x$, there exists a neighbourhood $V \subset X$ of $x$ such that  for each $y \in V,$ $$|f(y)- f(x)|_k < ' \epsilon.$$ Write  each $y \in V$ as $y = e_1 y_1 + e_2 y_2$ for some $y_1 \in e_1 V$ and $y_2 \in e_2 V$, we have   $$e_1 |f_1 (e_1 y - e_1 x)| + e_2 |f_2 (e_2 y - e_2 x)| < ' e_1 \epsilon_1 + e_2 \epsilon_2.$$ That is,  $$e_1 |f_1 (y_1 - x_1)| + e_2 |f_2 (y_2  - x_2 )| < ' e_1 \epsilon_1 + e_2 \epsilon_2,$$ for each $y_1 \in e_1 V$ and $y_2 \in e_2 V$. Then $e_1 V $ and $e_2 V$ are neighbourhoods of $x_1$ and $x_2$ in $e_1 X$ and $e_2 X$ respectively such that $$  |f_1 (y_1 - x_1)| <  \epsilon_1 $$ and $$|f_2 (y_2  - x_2 )|<  \epsilon_2, $$  for each $y_1 \in e_1 V$ and $y_2 \in e_2 V$. Thus,  ${f_1}_{|e_1 X}$  and ${f_2}_{| e_2 X}$ are  continuous.

Conversely, suppose  ${f_1}_{|e_1 X}$  and ${f_2}_{| e_2 X}$ are  continuous. Let $x \in  X$ and $\epsilon >' 0$ be given. Then there exist $x_1 \in e_1 X,  x_2 \in e_2 X$, $\epsilon_1, \epsilon_2>0$ such that $e_1 x= x_1,\; e_2 x= x_2$ and $\epsilon = e_1 \epsilon_1 + e_2 \epsilon_2$. Since  ${f_1}_{|e_1 X}$  and ${f_1}_{| e_2 X}$ are  continuous at $x_1$ and $x_2$ respectively, there exist neighbourhoods $e_1 U$ and $e_2 V$   of $x_1$ and $x_2$ in $e_1 X$ and $e_2 X$ respectively such that $$  |f_1 (y_1 - x_1)| <  \epsilon_1 $$ and $$|f_2 (y_2  - x_2 )|<  \epsilon_2, $$  for each $y_1 \in e_1 U$ and $y_2 \in e_2 V$. Then $O = e_1 U + e_2 V$ is a neighbourhood of $x$ in $X$ such that
 \begin{eqnarray*}
|f(y)- f(x)|_k &=& e_1 |f_1 (e_1 y - e_1 x)| + e_2 |f_2 (e_2 y - e_2 x)| \\
&=& e_1 |f_1 (y_1 - x_1)| + e_2 |f_2 (y_2  - x_2 )|  \\
&<'&  \epsilon_1 + \epsilon_2 \\
&=&  \epsilon,
\end{eqnarray*}
 for each $y \in O$. This shows that   $f$ is continuous.\\\\
$(ii)$  Suppose $f$ is $\mathbb{D}$-bounded in a neighbourhood $V\subset X$ of $0$. Then there exists a hyperbolic number $M >'0$ such that $|f(x)|_k \leq ' M$ for each $x \in V$. Thus $$ e_1| f_1 (e_1 x)| + e_2 |f_2 (e_2 x)| < ' M,$$ for each $x \in V$. Also, there exist $M_1 , M_2 >0$ such that  $M = e_1  M_1 + e_2 M_2$.  Then $ e_1 V$ and $ e_2 V$ are neighbourhoods of $0$  in $e_1 X$ and $e_2 X$ respectively such that  $$  |f_1 (x_1)|<M_1 $$ and  $$  |f_2 (x_2)|<M_2,$$ for each $x_1=e_1 x   \in  e_1 V$ and  $x_2 = e_2 x   \in  e_2 V.$ 

Conversely, suppose ${f_1}_{|e_1 X}$  and ${f_1}_{| e_2 X}$  are bounded in neighbourhoods $e_1 U$ and $e_2 V$   of $0$  in $e_1 X$ and $e_2 X$ respectively. Then there exist real numbers $M_1, M_2 >0$ such that  $$  |f_1 (x_1)|<M_1 $$ and  $$  |f_2 (x_2)|<M_2,$$ for each $x_1  \in  e_1 U$ and  $x_2   \in  e_2 V.$  Set  $M = e_1  M_1 + e_2 M_2$ and $O = e_1 U + e_2 V$. Then $M>'0$ and $O$ is a neighbourhood of $0$ in $X$ such that for each $x \in O$, we have
 \begin{eqnarray*}
| f(x)|_k &=& e_1 |f_1 ( e_1 x)| + e_2 |f_2 ( e_2 x)| \\
&<'&  e_1  M_1 + e_2 M_2 \\
&=& M.
\end{eqnarray*}

\end{proof}

The next theorem follows from  \cite [Theorem 1.17, Theorem 1.18 ] {rudin} and Theorem \ref{Thhb1}.
\begin{theorem}\label{Thhb2}
Let $X$ be a topological $\mathbb{BC}$-module (or $\mathbb{D}$-module) and $f$ be a $\mathbb{BC}$-linear (or $\mathbb{D}$-linear) functional on $X$. Then the following statements hold:
\begin{enumerate}
\item[(i)] $f$ is continuous at $0$.
\item[(ii)]  $f$ is continuous.
\item[(iii)]  $f$ is $\mathbb{D}$-bounded in some neighbourhood of $0$.
\end{enumerate}
\end{theorem}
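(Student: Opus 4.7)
The plan is to reduce each of the three conditions to its componentwise counterpart on the topological vector spaces $(X_1,\tau_1)$ and $(X_2,\tau_2)$ obtained from the idempotent decomposition, and then invoke the classical statements of Rudin (Theorem 1.17, that continuity at $0$ implies continuity for a linear map between topological vector spaces, and Theorem 1.18, that continuity is equivalent to boundedness in a neighbourhood of $0$) on each component, using Theorem \ref{Thhb1} to transfer the statements back. Equation \eqref{Eqhb1} provides the bridge: $f(x) = e_1 f_1(e_1 x) + e_2 f_2(e_2 x)$ with $f_l|_{X_l}$ an $\mathbb{R}$-linear (respectively $\mathbb{C}(i)$-linear) functional on the topological $\mathbb{R}$-vector space (respectively complex topological vector space) $(X_l,\tau_l)$ introduced right after Definition \ref{d1}.

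First I would prove (i)$\Rightarrow$(ii), the nontrivial half of the first equivalence. The bookkeeping behind Theorem \ref{Thhb1}(i) localises to the origin: if $V$ is a neighbourhood of $0$ in $X$ with $|f(y)|_k <' \epsilon = e_1 \epsilon_1 + e_2 \epsilon_2$ for all $y\in V$, then $e_l V$ is a neighbourhood of $0$ in $(X_l,\tau_l)$ on which $|f_l(y_l)| < \epsilon_l$, so each $f_l|_{X_l}$ is continuous at $0$. Rudin's Theorem 1.17 then upgrades each $f_l|_{X_l}$ to a continuous linear functional on all of $X_l$, and Theorem \ref{Thhb1}(i) repackages this as continuity of $f$ on $X$. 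The reverse implication (ii)$\Rightarrow$(i) is immediate.

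To close the loop I would show (ii)$\Leftrightarrow$(iii). Theorem \ref{Thhb1}(i) converts (ii) into the joint continuity of $f_1|_{X_1}$ and $f_2|_{X_2}$; Rudin's Theorem 1.18 identifies this with each component being bounded in some neighbourhood of $0$; and Theorem \ref{Thhb1}(ii) repackages the latter as $\mathbb{D}$-boundedness of $f$ in some neighbourhood of $0$, which is (iii). Every link in this chain is a biconditional, so (ii) and (iii) are equivalent and the three-way equivalence is complete.

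The only genuine subtlety is the interplay between a hyperbolic-valued bound $M = e_1 M_1 + e_2 M_2 >' 0$ on $X$ and ordinary real bounds $M_1,M_2$ on the components. This rests on $|f(x)|_k = e_1 |f_1(e_1 x)| + e_2 |f_2(e_2 x)|$ together with the componentwise nature of $<'$, and, dually, on the correspondence between neighbourhoods of $0$ in $(X,\tau)$ and in each $(X_l,\tau_l)$. Both points are exactly the content of Theorem \ref{Thhb1}, so once its two parts are invoked the theorem collapses to a one-line appeal to Rudin on each idempotent summand.
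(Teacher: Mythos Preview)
Your proposal is correct and matches the paper's approach exactly: the paper states only that the theorem ``follows from \cite[Theorem 1.17, Theorem 1.18]{rudin} and Theorem \ref{Thhb1},'' and you have simply unpacked what that one-line reference means. Your added remarks about the componentwise interpretation of $|f(x)|_k$ and of $<'$ are the bookkeeping implicit in Theorem \ref{Thhb1}, so nothing further is needed.
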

\begin{theorem}\label{Thhb3}
 Let $X$ be a topological   $\mathbb{D}$-module and 
\begin{equation}\label{Eqhb2}
f=e_1{f_1} +e_2 {f_2}
\end{equation} be a  $\mathbb{D}$-linear functional on $X$, where  $f_1$ and $f_2 $   are  non-constant $\mathbb{R}$-linear  functionals  on $X$ and $A$ be a   $\mathbb{D}$-convex subset $X$. Then the following statements hold:
\begin{enumerate}
\item[(i)] $f(A)$ is   $\mathbb{D}$-convex.
\item[(ii)] If $A$ is open,  then so is $f(A)$.
\end{enumerate}
\end{theorem}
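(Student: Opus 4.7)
The plan is to address the two assertions separately and to reduce (ii) to the classical fact that every nonzero linear functional on a topological vector space is open, via the idempotent decomposition.

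For (i), the argument is formal. Given $y_1, y_2 \in f(A)$, choose $x_i \in A$ with $f(x_i) = y_i$, and let $\lambda$ be an admissible convex-combination parameter (so $\lambda \in \mathbb{D}^{+}$ with $0 \leq^{'} \lambda \leq^{'} 1$ in the hyperbolic case, and the analogous condition in the $\mathbb{BC}$ case). Then the $\mathbb{D}$-convexity (resp.\ $\mathbb{BC}$-convexity) of $A$ places $\lambda x_1 + (1 - \lambda) x_2$ in $A$, and the $\mathbb{D}$-linearity (resp.\ $\mathbb{BC}$-linearity) of $f$ yields
$$
\lambda y_1 + (1 - \lambda) y_2 \;=\; f\bigl(\lambda x_1 + (1 - \lambda) x_2\bigr) \;\in\; f(A),
$$
so $f(A)$ is convex in the appropriate sense.

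For (ii), I would first identify a product structure on both sides. By Theorem \ref{Hb1}, the convex set $A$ satisfies $A = e_1 A + e_2 A$; setting $A_l := e_l A \subset X_l$, openness of $A$ in $\tau$ immediately gives $A_l$ open in $\tau_l$ from the definition $\tau_l = \{e_l G : G \in \tau\}$. Next, since $e_l \in \mathbb{D}$, the $\mathbb{D}$-linearity of $f$ gives $f(e_l x) = e_l f(x)$; comparing idempotent components on both sides (using $e_1 e_2 = 0$ and uniqueness of the decomposition) yields $f_l(e_l x) = f_l(x)$ and $f_l(e_j x) = 0$ for $j \neq l$. Applied to a general element $e_1 x_1 + e_2 x_2 \in A$, this gives
$$
f(e_1 x_1 + e_2 x_2) \;=\; e_1 f_1(x_1) + e_2 f_2(x_2),
$$
so that $f(A) = e_1 f_1(A_1) + e_2 f_2(A_2)$, the product set under the canonical identification of $\mathbb{BC}$ (resp.\ $\mathbb{D}$) with its idempotent decomposition, equipped with the product topology.

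It then suffices to show $f_l|_{X_l}(A_l)$ is open in $\mathbb{C}(i)$ (resp.\ $\mathbb{R}$). Since $f_l$ is non-constant and linear, $f_l|_{X_l}$ is a nonzero linear functional on the topological vector space $(X_l, \tau_l)$, and every such functional is automatically an open map: given a $0$-neighbourhood $V$, pick $x_0 \in X_l$ with $f_l(x_0) \neq 0$; absorbedness of $V$ supplies $\delta > 0$ with $\alpha x_0 \in V$ for $|\alpha| < \delta$, so $f_l(V)$ contains a scalar disk around $0$, and translation handles the general point. The subtle point, rather than a genuine obstacle, is that continuity of $f$ is not hypothesised, so the openness in (ii) must be driven by absorbedness of $0$-neighbourhoods (Theorem \ref{Hb2} together with its classical analogue on $X_l$) and linearity alone; once this is recognised, the conclusion follows by combining openness of the two component images through the product decomposition of $f(A)$.
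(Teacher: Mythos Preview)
Your proposal is correct and follows essentially the same route as the paper: both arguments reduce (ii) to the idempotent decomposition $f(A)=e_1 f_1(e_1A)+e_2 f_2(e_2A)$ and then invoke the classical fact that a nonzero linear functional on a topological vector space is an open map. The only cosmetic difference is that you decompose $A$ first via Theorem~\ref{Hb1} and push it through $f$, whereas the paper first uses part~(i) to decompose $f(A)$ directly; the substance is identical.
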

\begin{proof}$(i)$ Let $x, y \in f(A)$ and $\lambda \in \mathbb{D}$ such that $ 0 \leq' \lambda \leq' 1$. Then there exist $x',y' \in A$ such that $x= f(x')$ and $y = f(y')$. Since  $A$ is  $\mathbb{D}$-convex, $ \lambda x'+ (1- \lambda )  y' \in A.$  By $\mathbb{D}$-linearity of $f$, we  have  
 \begin{eqnarray*}
\lambda x + (1- \lambda )y &=& \lambda f(x') + (1- \lambda )  f(y')\\
 &=& f( \lambda x') +f( (1- \lambda )  y')\\
 &=& f( \lambda x'+ (1- \lambda )  y')\\
 &\in &  f(A).
\end{eqnarray*}\\
$(ii)$ By $(i)$,   $f(A)$ is    $\mathbb{D}$-convex, so we can write  $f(A) =  e_1f( A) + e_2 f( A)$. Now, by  $\mathbb{D}$-linearity  of $f$, and Equation (\ref{Eqhb2}), we get $e_1 f(A) =e_1 f(e_1 A)= e_1 f_1 (e_1 A)$ and  $e_2 f(A) = e_2 f(e_2 A)= e_2 f_2 (e_2 A)$. Therefore, we have  $ f(A) =e_1 f_1 (e_1 A) + e_2 f_2 (e_2 A).$  Since  $e_1A$ and $e_2 A$ are open sets in  $e_1 X$ and $e_2 X$ respectively and any non-constant  $\mathbb{R}$-linear functional on a  $\mathbb{R}$-vector space is an open mapping, it follows that $ f_1 (e_1 A)$ and $ f_2 (e_2 A)$ are open sets in $\mathbb{R}$. Therefore,  $f(A) =e_1 f_1 (e_1 A) + e_2 f_2 (e_2 A)$ is an open set in  $\mathbb{D}$.
\end{proof}
\end{section} 


\begin{section}{Uniform Boundedness Principle}

In this section, we study the  Baire category theorem for  $\mathbb{D}$-metric spaces and the principle of  uniform boundedness for $F$- bicomplex and  $F$-hyperbolic modules.

\begin{definition}\cite[Definition 5.6] {HS}
Let   $d_{\mathbb{D}} :  X \times X \rightarrow  \mathbb{D}$ be a function such that for any $x, y, z \in X$, the following properties hold:
 \begin{enumerate}
\item[(i)] $d_{\mathbb{D}}(x, y) \geq ' 0$  and  $d_{\mathbb{D}}(x, y) = 0$ if and only if $x =y$,
\item[(ii)]  $ d_{\mathbb{D}}(x, y) = d_{\mathbb{D}}(y, x$),
\item[(iii)] $d_{\mathbb{D}}(x, z) \leq' d_{\mathbb{D}}(x, y) + d_{\mathbb{D}}(y, z)$.
\end{enumerate}
Then $d_{\mathbb{D}}$ is called a hyperbolic-valued (or  $\mathbb{D}$-valued) metric on $X$ and the pair $(X, d_{\mathbb{D}})$ is called  a   hyperbolic metric (or $\mathbb{D}$-metric) space.
\end{definition}

\begin{definition} Let  $(X, d_{\mathbb{D}})$ be a  $\mathbb{D}$-metric space, $x \in X$ and $r>'0$ be a  hyperbolic number. Then    an open ball in $X$ with center $x$ and radius $r$ is denoted by $B(x, r)$ and is defined as $$B(x, r)  = \{y \in X\;:\;d_{\mathbb{D}}(x, y)<' r \}. $$
\end{definition}

\begin{definition} Let  $(X, d_{\mathbb{D}})$ be a  $\mathbb{D}$-metric space and $M \subset X$. Then  a point $x \in M$ is called an interior point of $M$  if there exists a $r>'0$ such that $B(x, r) \subset M.$
\end{definition}

\begin{definition} Let  $(X, d_{\mathbb{D}})$ be a  $\mathbb{D}$-metric space and $M \subset X$. Then  a point $x \in M$ is called a limit point of $M$  if for every  $r>'0$, the open ball $B(x, r)$ contains a point of $M$ other than $x$.
\end{definition}

\begin{definition} Let  $(X, d_{\mathbb{D}})$ be a  $\mathbb{D}$-metric space and $M \subset X$. Then $M$ is said to be open in $X$  if every point of $M$ is an interior point.
\end{definition}

\begin{definition} Let  $(X, d_{\mathbb{D}})$ be a  $\mathbb{D}$-metric space and $M \subset X$. Then $M$ is said to be closed in $X$  if it contains all its limit points.
\end{definition}

\begin{definition} A sequence $\{ x_n\}$ in a $\mathbb{D}$-metric space $(X, d_{\mathbb{D}})$ is said to converge to a point $x \in X$  if for every $\epsilon >' 0$, there exists $ N \in \mathbb{N}$ such that $$  d_{\mathbb{D}}(x_n , x) <' \epsilon \;\;\; \textmd{for every}\;\; n \geq N.$$ 
\end{definition}
\begin{definition} A sequence $\{ x_n\}$ in a $\mathbb{D}$-metric space $(X, d_{\mathbb{D}})$ is said to be a Cauchy sequence  if for every $\epsilon >' 0$, there exists $ N \in \mathbb{N}$ such that $$  d_{\mathbb{D}}(x_n , x_m) <' \epsilon \;\;\; \textmd{for every}\;\; n, m \geq N.$$
\end{definition}
\begin{definition}
 A $\mathbb{D}$-metric space $X$ is said to be complete if every Cauchy sequence in $X$ converges in $X$.
\end{definition}
The follwing theorem can be proved easily
\begin{theorem} A  $\mathbb{D}$-metric space is a topological space.
\end{theorem}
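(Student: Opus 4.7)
The plan is to verify the three topology axioms directly from the definitions of open ball and open set given in the excerpt. The empty set is vacuously open, and $X$ is open because for any $x \in X$ and any $r >' 0$, the ball $B(x, r)$ trivially lies in $X$. For an arbitrary union $\bigcup_\alpha M_\alpha$ of open sets, any point $x$ in the union lies in some $M_\beta$; the interior-point property of $M_\beta$ then provides an $r >' 0$ with $B(x, r) \subset M_\beta \subset \bigcup_\alpha M_\alpha$, so the union is open.

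The delicate step is closure under finite intersections. Given open sets $M_1, \ldots, M_n$ and $x \in \bigcap_{i=1}^n M_i$, I obtain hyperbolic numbers $r_i >' 0$ with $B(x, r_i) \subset M_i$ for each $i$. Because $\leq'$ is only a partial order on $\mathbb{D}$, the naive "minimum" of the $r_i$ is not available; instead I invoke the idempotent decomposition $r_i = e_1 r_{i,1} + e_2 r_{i,2}$, which gives $r_{i,1} > 0$ and $r_{i,2} > 0$ for each $i$. Set
\[
r \; := \; e_1 \min_{1 \leq i \leq n} r_{i,1} \; + \; e_2 \min_{1 \leq i \leq n} r_{i,2}.
\]
Then $r >' 0$ since both real minima are strictly positive, and $r \leq' r_i$ for every $i$ by comparing the idempotent components.

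It remains to verify $B(x, r) \subset B(x, r_i)$ for each $i$. If $y \in B(x, r)$, then $d_{\mathbb{D}}(x, y) <' r$, and combining this with $r \leq' r_i$ yields $d_{\mathbb{D}}(x, y) <' r_i$; this step uses the small auxiliary fact that $a <' b$ and $b \leq' c$ imply $a <' c$, which is immediate by looking at the two real components of the idempotent representation. Hence $y \in B(x, r_i) \subset M_i$, so $B(x, r) \subset \bigcap_i M_i$, and $x$ is an interior point of the intersection.

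The only genuine obstacle is the one just addressed: since $\mathbb{D}$ carries only a partial order, one cannot simply take the minimum of the $r_i$, and all radius comparisons must be done componentwise via the idempotent decomposition to ensure the common radius $r$ remains strictly positive in the sense of $>' 0$.
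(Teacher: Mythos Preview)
Your argument is correct. The paper does not actually supply a proof of this theorem; it merely remarks that the result ``can be proved easily'' and moves on, so there is no paper proof to compare against. Your treatment of the finite-intersection axiom is exactly the right one in this setting, and in fact the componentwise minimum you build,
\[
r = e_1 \min_i r_{i,1} + e_2 \min_i r_{i,2},
\]
is precisely the $\mathbb{D}$-infimum $\inf_{\mathbb{D}}\{r_1,\ldots,r_n\}$ introduced in the paper's preliminaries; you could have cited that definition directly, but the explicit componentwise verification you give is equally valid and makes the strict positivity $r >' 0$ transparent.
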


The following theorem is the Baire category theorem for  $\mathbb{D}$-metric spaces.
\begin{theorem}\label{BCT}
Let $(X, d_{\mathbb{D}})$ be a complete $\mathbb{D}$-metric space such that $X= \bigcup\limits_{n=1}^{\infty}F_n$ where each $F_n$ is a closed subset of $X$. Then at least one of the $F_n$'s contains a non-empty open set.
\end{theorem}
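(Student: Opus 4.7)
The plan is to mimic the classical Banach proof of Baire's theorem, arguing by contradiction and building a nested sequence of ``balls'' in $(X,d_{\mathbb{D}})$, but keeping track of each radius through its two idempotent components. Assume the theorem fails, i.e.\ every $F_n$ has empty interior. Then each $G_n:=X\setminus F_n$ is open and has the property that $G_n\cap U\neq\emptyset$ for every non-empty open $U\subset X$ (otherwise such a $U$ would sit inside $F_n$, contradicting that $F_n$ has empty interior). I would verify this elementary equivalence in the $\mathbb{D}$-metric setting as a preliminary lemma, using only the definitions of open set, open ball and interior point given in Section~4.

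Next I would construct the sequence. Fix $x_0\in X$ and $r_0>'0$, say $r_0=e_1+e_2$. Inductively, given $x_{n-1}$ and $r_{n-1}>'0$, the set $G_n\cap B(x_{n-1},r_{n-1})$ is a non-empty open set, so I pick $x_n$ in it and then choose a hyperbolic radius $r_n>'0$ with $r_n<'r_{n-1}/2$ such that
\[
K_n:=\{y\in X:d_{\mathbb{D}}(x_n,y)\leq' r_n\}\subset B(x_{n-1},r_{n-1})\cap G_n .
\]
Writing $r_{n-1}=e_1\rho_{n-1}^{(1)}+e_2\rho_{n-1}^{(2)}$ with each $\rho_{n-1}^{(i)}>0$, I pick real numbers $0<\rho_n^{(i)}<\rho_{n-1}^{(i)}/2$ small enough so that the above inclusion holds, and set $r_n:=e_1\rho_n^{(1)}+e_2\rho_n^{(2)}$. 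Such a choice is possible because $B(x_{n-1},r_{n-1})\cap G_n$ is open, so it contains some $B(x_n,s)$ with $s>'0$; shrinking $r_n$ below $s$ componentwise puts $K_n$ inside it.

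For $m\geq n$ one has $x_m\in K_n$, hence $d_{\mathbb{D}}(x_n,x_m)\leq' r_n$; the bound $\rho_n^{(i)}<2^{-n}\rho_0^{(i)}$ drives both components to zero, so $r_n\to 0$ in the partial order, and $\{x_n\}$ is Cauchy. Completeness delivers $x\in X$ with $x_n\to x$. Passing to the limit in $d_{\mathbb{D}}(x_n,x_m)\leq' r_n$ (via the triangle inequality) gives $d_{\mathbb{D}}(x_n,x)\leq' r_n$, so $x\in K_n\subset G_n$ for every $n$. Therefore $x\notin F_n$ for any $n$, contradicting $X=\bigcup_n F_n$.

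The main obstacle is bookkeeping for the hyperbolic-valued radii: at every step both idempotent components $\rho_n^{(1)},\rho_n^{(2)}$ must stay strictly positive (so that $r_n>'0$, not merely $r_n\geq'0$), and both must be forced to zero simultaneously so that the resulting sequence is Cauchy in the sense of Section~4. A secondary technical point that needs a line of justification is that the set $K_n$ is sequentially closed in $(X,d_{\mathbb{D}})$, which follows directly from the triangle inequality in the $\mathbb{D}$-metric; once this is in hand, the rest of the argument is a routine transcription of the classical real proof into the hyperbolic partial-order framework.
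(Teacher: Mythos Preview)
Your proposal is correct and follows essentially the same classical Baire-type argument as the paper: argue by contradiction, build a nested sequence of shrinking balls avoiding the $F_n$ one at a time, use completeness to get a limit, and derive a contradiction. The only cosmetic differences are that the paper works directly with open balls $B_n=B(x_n,\epsilon_n)$ and the half-radius nesting $B_{n+1}\subset B(x_n,\epsilon_n/2)$ (with the bound $\epsilon_n<'1/2^n$ handled via the partial order without decomposing into idempotent components), whereas you phrase the density via $G_n=X\setminus F_n$, use closed balls $K_n$, and track the two real components $\rho_n^{(1)},\rho_n^{(2)}$ of each radius explicitly; neither variation changes the substance of the proof.
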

\begin{proof}
Suppose each $F_n$  does not contain a non-empty open set. Since $X\setminus F_1$ is a non-empty open set, we choose $x_1 \in X\setminus F_1$ and $0<'\epsilon_1 <' 1/2$ such that  $B_1\cap F_1= \phi$ where $B_1 =B(x_1, \epsilon_1)$. The open ball $B(x_1, \epsilon_1/2)$ is not contained in $F_2$, hence there is some $x_2 \in B(x_1, \epsilon_1/2)$ and $0<' \epsilon_2 <' 1/2^2 $ such that $B_2\cap F_2 = \phi$ and $B_2 \subset B(x_1, \epsilon_1/2)$, where $B_2 =B(x_2, \epsilon_2)$. By induction, we obtain the sequences  $\{x_n\}$ and $\{\epsilon_n\}$ which satisfy the following:
\begin{enumerate}
\item[(i)] $B_{n+1}\subset B(x_n, \epsilon_n/2)$,
\item[(ii)] $0<' \epsilon_n <' 1/2^n $ and
\item[(iii)]$ B_n  \cap F_n = \phi$.
\end{enumerate}
For $n<m$,
\begin{eqnarray*}
d_{\mathbb{D}}(x_n, x_m )& \leq ' & d_{\mathbb{D}}(x_n, x_{n+1} )+d_{\mathbb{D}}(x_{n+1}, x_{n+2} )+\;.\;.\;.+d_{\mathbb{D}}(x_{m-1}, x_m)\\
&< '& \epsilon_n/2+ \epsilon_{n+1}/2+\;.\;.\;.+\epsilon_{m-1}/2\\
&< '& 1/2^{n+1}+ 1/2^{n+2}+\;.\;.\;.+1/2^{m}\\
&< '& 1/2^n.
\end{eqnarray*}
Thus,   $\{x_n\}$ is a Cauchy sequence in $X$. Since $X$ is complete, $\{x_n\}$ converges to a point $x$. Also, 
\begin{eqnarray*}
d_{\mathbb{D}}(x_n, x )& \leq ' & d_{\mathbb{D}}(x_n, x_m )+d_{\mathbb{D}}(x_{m}, x )\\
&< '& \epsilon_n/2+d_{\mathbb{D}}(x_{m}, x ) \\
&\rightarrow& \epsilon_n/2.
\end{eqnarray*}
It follows that $x \in B_n$,  for each $n$. Since $ B_n \cap F_n = \phi$, we see that $x \notin F_n$,  for each $n$. Therefore, $x \notin \bigcup\limits_{n=1}^{\infty}F_n$. This contradicts the fact that  $X= \bigcup\limits _{n=1}^{\infty}F_n$. Thus,  one of the $F_n$'s contains a non-empty open set. 
\end{proof}

\begin{definition}
An $F$-bicomplex module  (or $F$-$\mathbb{BC}$  module) is a $\mathbb{BC}$-module $X$ having a $\mathbb{D}$-valued metric $d_{\mathbb{D}}$ such that the following properties hold:
\begin{enumerate}
\item[(i)] $d_{\mathbb{D}}$ is translation invariant.
\item[(ii)] $ \cdot\; : \mathbb{BC} \times X \longrightarrow X$ is continuous.
\item[(iii)] $(X, d_{\mathbb{D}})$ is  complete $\mathbb{D}$-metric space.
\end{enumerate}
\end{definition}

\begin{definition}
An $F$-hyperbolic module (or $F$-$\mathbb{D}$  module) is a $\mathbb{D}$-module $X$ having a $\mathbb{D}$-valued metric $d_{\mathbb{D}}$ such that the following properties hold:
\begin{enumerate}
\item[(i)] $d_{\mathbb{D}}$ is translation invariant.
\item[(ii)] $ \cdot\; : \mathbb{D} \times X \longrightarrow X$ is continuous.
\item[(iii)] $(X, d_{\mathbb{D}})$ is  complete $\mathbb{D}$-metric space.
\end{enumerate}
\end{definition}

For each $x\in X$, define $||x||_{\mathbb{D}}=d_{\mathbb{D}}(x, 0)$. We see that, if  $d_{\mathbb{D}}$ is translation invariant  $\mathbb{D}$-metric on $X$, then the properties $d_{\mathbb{D}}(x, y)=0$ if and only if $x=y$, $d_{\mathbb{D}}(x, y) \leq ' d_{\mathbb{D}}(x, z)+d_{\mathbb{D}}(z, y)$  and  $d_{\mathbb{D}}(x, y)=d_{\mathbb{D}}(y, x)$  are equivalent to $||x||_{\mathbb{D}}=0$ if and only if $x=0$,  $||x+y||_{\mathbb{D}} \leq'  ||x||_{\mathbb{D}}+ ||y||_{\mathbb{D}}$  and $||-x||_{\mathbb{D}} =||x||_{\mathbb{D}}$ respectively. Infact,\\\\
$(i)$
\begin{eqnarray*}
 ||x||_{\mathbb{D}}=0 & \Leftrightarrow & d_{\mathbb{D}}(x, 0) =0\\
& \Leftrightarrow & x=0.
\end{eqnarray*}
$(ii)$ 
\begin{eqnarray*}
 ||x+y||_{\mathbb{D}} &= & d_{\mathbb{D}}(x+y, 0)\\
 &= & d_{\mathbb{D}}(x, -y) \\
 &\leq' & d_{\mathbb{D}}(x, 0)+ d_{\mathbb{D}}(0, -y)\\
&= & d_{\mathbb{D}}(x, 0)+d_{\mathbb{D}}(y, -y+y)\\
&= & d_{\mathbb{D}}(x, 0)+d_{\mathbb{D}}(y, 0)\\
 &= &||x||_{\mathbb{D}}+||y||_{\mathbb{D}}.
\end{eqnarray*}
$(iii)$ 
\begin{eqnarray*}
 ||-x||_{\mathbb{D}} &= & d_{\mathbb{D}}(-x, 0)\\
 &= & d_{\mathbb{D}}(0, -x) \\
 &= & d_{\mathbb{D}}(x, -x+x) \\
&= & d_{\mathbb{D}}(x, 0)\\
 &= &||x||_{\mathbb{D}}.
\end{eqnarray*}

\begin{theorem}\label{URT} An $F$-$\mathbb{BC}$ module is a topological $\mathbb{BC}$-module.
\end{theorem}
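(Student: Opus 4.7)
The plan is to verify the three defining conditions of a topological $\mathbb{BC}$-module (the obvious bicomplex analogue of Definition \ref{d1}): that the topology induced by $d_{\mathbb{D}}$ is Hausdorff, that addition $+:X\times X\to X$ is continuous, and that scalar multiplication $\cdot:\mathbb{BC}\times X\to X$ is continuous. Theorem 4.10 already supplies the topology, and continuity of scalar multiplication is built into the definition of an $F$-$\mathbb{BC}$ module (property (ii)), so only the Hausdorff property and continuity of addition need to be checked.

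For continuity of addition, the only ingredient is translation invariance of $d_{\mathbb{D}}$. Given $(x_0,y_0)\in X\times X$ and $\epsilon>'0$, the triangle inequality gives
\begin{equation*}
d_{\mathbb{D}}(x+y,\, x_0+y_0)\ \leq'\ d_{\mathbb{D}}(x+y,\, x_0+y)+d_{\mathbb{D}}(x_0+y,\, x_0+y_0),
\end{equation*}
and translation invariance rewrites the two terms on the right as $d_{\mathbb{D}}(x,x_0)$ and $d_{\mathbb{D}}(y,y_0)$. Taking $\delta=\epsilon/2 >' 0$ and restricting $(x,y)$ to $B(x_0,\delta)\times B(y_0,\delta)$ then forces $x+y\in B(x_0+y_0,\epsilon)$, which is exactly continuity of addition at the arbitrary point $(x_0,y_0)$.

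For Hausdorffness, given distinct $x,y\in X$ I set $\alpha:=d_{\mathbb{D}}(x,y)=e_1\alpha_1+e_2\alpha_2\in\mathbb{D}^{+}\setminus\{0\}$. The one genuine subtlety, and the main place one has to think carefully, is that $\alpha$ need not be strictly positive in the $>'$ sense: because $\mathbb{D}$ has zero divisors, it may happen that $\alpha_1>0$ while $\alpha_2=0$ (or vice versa), and the naive choice of radius $\alpha/2$ fails since it is not $>'0$. I would instead take $r=e_1 r_1+e_2 r_2$ with $r_\ell=\alpha_\ell/4$ in each idempotent component where $\alpha_\ell>0$ and $r_\ell$ any positive real otherwise; this guarantees $r>'0$ while ensuring $\alpha<'2r$ fails in at least one idempotent component. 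Any $z\in B(x,r)\cap B(y,r)$ would then yield
\begin{equation*}
\alpha=d_{\mathbb{D}}(x,y)\ \leq'\ d_{\mathbb{D}}(x,z)+d_{\mathbb{D}}(z,y)\ <'\ 2r,
\end{equation*}
a contradiction; hence $B(x,r)\cap B(y,r)=\emptyset$ and the topology is Hausdorff. Combined with continuity of addition and the hypothesised continuity of scalar multiplication, this completes the verification that $X$ is a topological $\mathbb{BC}$-module.
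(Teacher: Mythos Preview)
Your argument is correct. The paper itself states Theorem \ref{URT} without proof, so there is no argument to compare against; your proposal supplies exactly the routine verification the paper omits. In particular, your treatment of the Hausdorff property is more careful than anything in the paper: because the axiom $d_{\mathbb{D}}(x,y)=0\Leftrightarrow x=y$ together with $d_{\mathbb{D}}(x,y)\geq'0$ does not preclude one idempotent component of $d_{\mathbb{D}}(x,y)$ from vanishing when $x\ne y$, the naive radius $d_{\mathbb{D}}(x,y)/2$ need not satisfy $>'0$, and your componentwise choice of $r$ correctly sidesteps this so that $\alpha<'2r$ genuinely fails. The continuity-of-addition argument via translation invariance and the triangle inequality is standard and correct, and continuity of scalar multiplication is indeed part of the definition of an $F$-$\mathbb{BC}$ module.
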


The next theorem demonstrates the principle of uniform boundedness in the setting of  $F$-$\mathbb{BC}$ modules. The  uniform boundedness principle for $F$-module spaces with real-valued metric was stated in \cite{LL}. Here, we prove the result for $F$-$\mathbb{BC}$ modules with hyperbolic-valued metric.
\begin{theorem}\label{UBP}
For each $\alpha \in \wedge,$ let $T_\alpha : X \rightarrow Y$ be a continuous $\mathbb{BC}$-linear map form a $F$-$\mathbb{BC}$ module $X$ to a $F$-$\mathbb{BC}$ module $Y$. If for each $x \in X,$ the set $ \{ T_\alpha( x) : \alpha \in \wedge  \}$ is $\mathbb{D}$-bounded then $\lim_{x \rightarrow 0} T_\alpha (x )= 0$ uniformly for $\alpha \in \wedge.$
\end{theorem}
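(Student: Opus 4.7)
The plan is to mimic the classical Baire-based proof of the Banach--Steinhaus theorem, with the $\mathbb{D}$-valued Baire category theorem (Theorem \ref{BCT}) playing the role of its real counterpart. For each positive integer $n$ I would introduce
\begin{equation*}
 F_n = \{x \in X \; : \; \|T_\alpha(x)\|_{\mathbb{D}} \leq' n(e_1+e_2) \text{ for every } \alpha \in \wedge\},
\end{equation*}
with the convention $\|y\|_{\mathbb{D}} := d_{\mathbb{D}}(y,0)$ introduced just before Theorem \ref{URT}. Each $F_n$ is an intersection over $\alpha$ of the preimages $T_\alpha^{-1}(\{y \in Y : \|y\|_{\mathbb{D}} \leq' n(e_1+e_2)\})$. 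Continuity of the $\mathbb{D}$-valued norm (a direct consequence of the triangle inequality together with the componentwise character of $\leq'$ in the idempotent representation), combined with continuity of each $T_\alpha$, makes every $F_n$ closed. Pointwise $\mathbb{D}$-boundedness then yields $X = \bigcup_n F_n$: for a given $x$, pick $M = e_1 m_1 + e_2 m_2 \in \mathbb{D}^+$ with $\|T_\alpha(x)\|_{\mathbb{D}} \leq' M$ for every $\alpha$, and any natural number $n \geq \max(m_1,m_2)$ places $x$ in $F_n$.

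Applying Theorem \ref{BCT} to the complete $\mathbb{D}$-metric space $X$ produces some $n_0$ and an open $\mathbb{D}$-ball $B(x_0,r) \subset F_{n_0}$. Translation invariance of $d_{\mathbb{D}}$ shows $y + x_0 \in B(x_0,r)$ whenever $\|y\|_{\mathbb{D}} <' r$; together with linearity of $T_\alpha$ and the triangle inequality, this gives
\begin{equation*}
 \|T_\alpha(y)\|_{\mathbb{D}} \; \leq' \; \|T_\alpha(y+x_0)\|_{\mathbb{D}} + \|T_\alpha(x_0)\|_{\mathbb{D}} \; \leq' \; 2n_0(e_1+e_2),
\end{equation*}
uniformly for $\alpha \in \wedge$ and $\|y\|_{\mathbb{D}} <' r$.

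The main obstacle is the last step of converting this uniform bound on a fixed $\mathbb{D}$-ball into the limit statement $\lim_{x \to 0} T_\alpha(x) = 0$ uniformly in $\alpha$: because the $F$-module metric need not be homogeneous in the scalar, the Banach-space trick of rescaling a ball is unavailable. I would resolve this via the idempotent decomposition. Writing $X = e_1 X_1 + e_2 X_2$, $Y = e_1 Y_1 + e_2 Y_2$ and $T_\alpha = e_1 T_{\alpha,1} + e_2 T_{\alpha,2}$, each $X_l$ and $Y_l$ becomes a real $F$-space under the $l$-th idempotent component of the corresponding $\mathbb{D}$-metric, and each $T_{\alpha,l}$ is a continuous $\mathbb{C}(i)$-linear map by Theorem \ref{Thhb1}. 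The pointwise $\mathbb{D}$-boundedness hypothesis decomposes componentwise, so the classical real $F$-space Banach--Steinhaus theorem supplies, for each $\epsilon_l > 0$, a $\delta_l > 0$ with $\|T_{\alpha,l}(z_l)\|_l < \epsilon_l$ whenever $\|z_l\|_l < \delta_l$, uniformly in $\alpha$. Given any $\epsilon = e_1\epsilon_1 + e_2\epsilon_2 >' 0$, the choice $\delta = e_1\delta_1 + e_2\delta_2 >' 0$ then yields the desired uniform hyperbolic $\epsilon$-$\delta$ statement and completes the proof.
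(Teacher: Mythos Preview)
Your Baire setup diverges from the paper's exactly at the obstacle you identify. The paper (following Dunford--Schwartz) sidesteps the rescaling issue by building $\epsilon$ and the factor $1/n$ into the closed sets from the start: fixing $\epsilon >' 0$, it takes
\[
X_n \;=\; \Bigl\{\, x \in X \;:\; \bigl\|\tfrac{1}{n}T_\alpha(x)\bigr\|_{\mathbb{D}} + \bigl\|\tfrac{1}{n}T_\alpha(-x)\bigr\|_{\mathbb{D}} \leq' \tfrac{\epsilon}{2} \ \text{for all}\ \alpha \,\Bigr\}.
\]
Baire then produces a ball $B(x_0,\delta)\subset X_{n_0}$, and the translation argument gives $\|T_\alpha(x/n_0)\|_{\mathbb{D}} = \|\tfrac{1}{n_0}T_\alpha(x)\|_{\mathbb{D}} \leq' \epsilon$ for $\|x\|_{\mathbb{D}} <' \delta$; since $x\mapsto x/n_0$ is a homeomorphism of $X$ (continuity of scalar multiplication), this already \emph{is} the uniform $\epsilon$--$\delta$ statement, and no norm homogeneity is ever invoked. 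Thus the paper's proof stays entirely within the $\mathbb{D}$-framework and uses only Theorem~\ref{BCT}. Your route via idempotent decomposition plus the classical real $F$-space Banach--Steinhaus is a legitimate alternative, but once you take it your initial Baire computation on $X$ is never used and could be dropped. One point you should still justify: the claim that each $e_l X$ is a real $F$-space under the $e_l$-component of $d_{\mathbb{D}}$ (Theorem~\ref{Thhb1} concerns functionals, not metrics or operators between modules); in particular you need that this component, restricted to $e_l X$, is a genuine complete translation-invariant metric rather than merely a pseudometric, which is not established anywhere in the paper.
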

\begin{proof} For given $\epsilon >' 0$ and each positive integer $n$, consider the set $$X_n = \{ x \in X \; : \;  \left |\left |\frac{1}{n}T_{\alpha}(x)\right|\right|_{\mathbb{D}}+  \left |\left |\frac{1}{n}T_{\alpha}(-x)\right|\right|_{\mathbb{D}} \leq ' \frac{\epsilon}{2},\;\; \alpha \in \wedge \}. $$ Since $T_\alpha$ is continuous we see that each set $X_n$ is closed. Also, $\cup_{n=1}^{\infty}X_n=X$. Therefore, by Theorem \ref{BCT}, some $X_{n_{0}}$ contains an open ball $B(x_0 , \delta)$. That is, if $||x||_{\mathbb{D}}  <' \delta$, then $$ \left |\left |\frac{1}{n_{0}}T_{\alpha}(x_0 +x)\right|\right|_{\mathbb{D}} \leq ' \frac{\epsilon}{2}.$$ Since $T_\alpha$ is $\mathbb{BC}$-linear, we have
\begin{eqnarray*}
 \left |\left |\frac{1}{n_{0}}T_{\alpha}(x)\right|\right|_{\mathbb{D}} &= &  \left |\left |\frac{1}{n_{0}}T_{\alpha}(x+x_0 -x_0 )\right|\right|_{\mathbb{D}}\\
&= &  \left |\left |\frac{1}{n_{0}}T_{\alpha}(x+x_0 )+T_{\alpha}( -x_0 )\right|\right|_{\mathbb{D}}\\
 &\leq' & \left |\left |\frac{1}{n_{0}}T_{\alpha}(x+x_0 )\right|\right|_{\mathbb{D}}+  \left |\left |\frac{1}{n_{0}}T_{\alpha}(-x_0 )\right|\right|_{\mathbb{D}}.\\
\end{eqnarray*}
Thus, if  $||x||_{\mathbb{D}}  <' \delta$, then $$\left |\left |T_{\alpha}\left(\frac{1}{n_{0}} x\right)\right|\right|_{\mathbb{D}}=  \left |\left |\frac{1}{n_{0}}T_{\alpha}(x)\right|\right|_{\mathbb{D}} \leq ' \epsilon.$$ Since the mapping $ x \rightarrow x/n_{0}$ is a homeomorphism, it follows that $\lim_{x \rightarrow 0} T_\alpha (x )= 0$ uniformly for $\alpha \in \wedge.$
\end{proof}
Theorems \ref{URT} and \ref{UBP} in the setting of $F$-$\mathbb{D}$ modules can be stated as:

\begin{theorem} An $F$-$\mathbb{D}$ module is a topological $\mathbb{D}$-module.
\end{theorem}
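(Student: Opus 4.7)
The plan is to verify the three defining conditions of Definition \ref{d1} for the topology on $X$ induced by the $\mathbb{D}$-metric $d_{\mathbb{D}}$: the Hausdorff property, joint continuity of the addition map $+:X\times X\to X$, and joint continuity of the scalar multiplication map $\cdot:\mathbb{D}\times X\to X$. The last condition is already part of the definition of an $F$-$\mathbb{D}$ module, so nothing is required there; the work reduces to Hausdorffness and continuity of addition. This is the direct analogue of Theorem \ref{URT} and the argument will follow the same template as in the $F$-$\mathbb{BC}$ case.

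For the Hausdorff property, I would take distinct $x,y\in X$ and set $r=d_{\mathbb{D}}(x,y)$, which satisfies $r>'0$ by axiom (i) of a $\mathbb{D}$-metric. Since $r>'0$, the hyperbolic number $r/2$ also satisfies $r/2>'0$, so the open balls $B(x,r/2)$ and $B(y,r/2)$ are well defined. If some $z$ lay in both, the triangle inequality would give
$$r=d_{\mathbb{D}}(x,y)\leq' d_{\mathbb{D}}(x,z)+d_{\mathbb{D}}(z,y)<' r/2+r/2=r,$$
contradicting the reflexivity of $\leq'$. Hence the two balls are disjoint and the induced topology is Hausdorff.

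For the continuity of $+$, translation invariance of $d_{\mathbb{D}}$ is the essential tool. Fix $(x_0,y_0)\in X\times X$ and $\epsilon>'0$. For any $(x,y)$ with $d_{\mathbb{D}}(x,x_0)<'\epsilon/2$ and $d_{\mathbb{D}}(y,y_0)<'\epsilon/2$, the triangle inequality together with translation invariance yields
$$d_{\mathbb{D}}(x+y,x_0+y_0)\leq' d_{\mathbb{D}}(x+y,x_0+y)+d_{\mathbb{D}}(x_0+y,x_0+y_0)=d_{\mathbb{D}}(x,x_0)+d_{\mathbb{D}}(y,y_0)<'\epsilon.$$
Taking the product-topology neighbourhood $B(x_0,\epsilon/2)\times B(y_0,\epsilon/2)$ of $(x_0,y_0)$ then exhibits the required preimage of $B(x_0+y_0,\epsilon)$ under $+$, so $+$ is continuous at the arbitrary point $(x_0,y_0)$, and hence on all of $X\times X$.

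I do not expect a real obstacle, since every step mirrors the corresponding real-metric classical argument and the bicomplex version in Theorem \ref{URT}. The one place meriting a sanity check is the manipulation of the strict order $<'$ under addition and halving of hyperbolic numbers; but this reduces componentwise under the idempotent decomposition $\alpha=e_1\alpha_1+e_2\alpha_2$ to the familiar strict inequalities on $\mathbb{R}$, so the passage is routine.
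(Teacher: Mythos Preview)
The paper does not actually supply a proof for this theorem, nor for its bicomplex analogue Theorem~\ref{URT}; both are simply stated. So there is nothing in the paper to compare your argument against---you are filling in what the authors omitted, and your overall strategy (verify Hausdorffness and continuity of $+$, with continuity of scalar multiplication coming for free from the definition) is exactly the natural one.

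Your continuity-of-addition argument is correct. The Hausdorff step, however, has a small but genuine gap. You assert that for distinct $x,y$ the value $r=d_{\mathbb{D}}(x,y)$ satisfies $r>'0$ ``by axiom (i) of a $\mathbb{D}$-metric.'' Axiom (i) only gives $r\geq'0$ and $r\neq 0$; in $\mathbb{D}^{+}$ a nonzero element need not be strictly positive (for instance $e_1=e_1\cdot 1+e_2\cdot 0$ is nonzero but not $>'0$). If $r$ happens to be such a zero divisor then $r/2$ is not $>'0$, and the paper's definition of an open ball requires a radius $>'0$, so $B(x,r/2)$ is not available. The repair is routine once noticed: write $r=e_1 r_1+e_2 r_2$ with $r_1,r_2\geq 0$ and at least one of them strictly positive, and take as radius any $\epsilon=e_1\epsilon_1+e_2\epsilon_2>'0$ with $\epsilon_l=r_l/2$ whenever $r_l>0$ and $\epsilon_l>0$ arbitrary otherwise. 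The triangle-inequality contradiction then appears in whichever idempotent component has $r_l>0$. With this adjustment your proof goes through.
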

\begin{theorem}
For each $\alpha \in \wedge,$ let $T_\alpha : X \rightarrow Y$ be a continuous $\mathbb{D}$-linear map form a $F$-$\mathbb{D}$ module $X$ to a $F$-$\mathbb{D}$ module $Y$. If for each $x \in X,$ the set $ \{ T_\alpha( x) : \alpha \in \wedge  \}$ is $\mathbb{D}$-bounded then $\lim_{x \rightarrow 0} T_\alpha (x )= 0$ uniformly for $\alpha \in \wedge.$
\end{theorem}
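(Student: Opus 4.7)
The plan is to observe that the proof of Theorem \ref{UBP} transfers verbatim to the $F$-$\mathbb{D}$ setting, because every step of that argument relies only on scalar multiplications by real numbers (namely $1/n$ and $-1$, both of which lie in $\mathbb{D}$), on translation invariance of $d_\mathbb{D}$, on continuity of the $\mathbb{D}$-action, and on completeness of the ambient $\mathbb{D}$-metric space. All of these are built into the definition of an $F$-$\mathbb{D}$ module, so nothing beyond $\mathbb{D}$-linearity is ever invoked.

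Concretely, I would fix $\epsilon >' 0$ and, for each positive integer $n$, set
\[
X_n = \Bigl\{ x \in X \;:\; \Bigl\|\tfrac{1}{n}T_\alpha(x)\Bigr\|_\mathbb{D} + \Bigl\|\tfrac{1}{n}T_\alpha(-x)\Bigr\|_\mathbb{D} \leq' \tfrac{\epsilon}{2} \text{ for all } \alpha \in \wedge \Bigr\}.
\]
Continuity of each $T_\alpha$ together with continuity of $\|\cdot\|_\mathbb{D}$ shows that each $X_n$ is closed, while the pointwise $\mathbb{D}$-boundedness hypothesis on $\{T_\alpha(x) : \alpha \in \wedge\}$ yields $X = \bigcup_{n=1}^\infty X_n$. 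Completeness of $(X,d_\mathbb{D})$ then lets me invoke the Baire category theorem for $\mathbb{D}$-metric spaces (Theorem \ref{BCT}) to extract some $n_0$ such that $X_{n_0}$ contains an open ball $B(x_0,\delta)$.

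From there I would, exactly as in the proof of Theorem \ref{UBP}, exploit the $\mathbb{D}$-linearity of $T_\alpha$ by writing $x = (x+x_0) - x_0$ and applying the triangle inequality to obtain $\|T_\alpha(\tfrac{1}{n_0}x)\|_\mathbb{D} \leq' \epsilon$ whenever $\|x\|_\mathbb{D} <' \delta$. Since $1/n_0 \in \mathbb{D}$ and the scalar action is continuous, the map $x \mapsto x/n_0$ is a homeomorphism on $X$, and pushing the neighbourhood $B(0,\delta)$ forward under it produces a neighbourhood $U$ of $0$ on which $\|T_\alpha(y)\|_\mathbb{D} \leq' \epsilon$ uniformly in $\alpha$. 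This is precisely the statement $\lim_{x\to 0} T_\alpha(x) = 0$ uniformly in $\alpha$.

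No serious obstacle is expected, since the argument is a direct translation of the bicomplex proof. The one point that deserves attention is verifying that the hypotheses of Theorem \ref{UBP} that were phrased in terms of $\mathbb{BC}$-linearity and continuity of the $\mathbb{BC}$-action are only used in their restrictions to $\mathbb{D}$; this is immediate from inspection of the proof. As an alternative, one could appeal to the idempotent decomposition $T_\alpha = e_1 T_\alpha^1 + e_2 T_\alpha^2$ and to the decomposition $X = e_1 X + e_2 X$ into real $F$-spaces, apply the classical Banach--Steinhaus theorem to each component $\{T_\alpha^l\}$ on $e_l X$, and recombine through $\|\cdot\|_\mathbb{D} = e_1\|\cdot\|_1 + e_2\|\cdot\|_2$; either route furnishes the result.
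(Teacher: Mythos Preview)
Your proposal is correct and matches the paper's approach exactly: the paper does not give a separate proof for the $F$-$\mathbb{D}$ case but simply states the theorem immediately after the $F$-$\mathbb{BC}$ version (Theorem~\ref{UBP}), indicating that the same argument applies verbatim. Your explicit verification that only $\mathbb{D}$-scalars, translation invariance, continuity of the $\mathbb{D}$-action, and completeness are used is precisely the justification the paper leaves implicit.
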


\end{section}


\begin{section}{Open Mapping and Closed Graph Theorems}
In this section, we discuss the open mapping theorem, the inverse mapping theorem and the closed graph theorem for  $F$- bicomplex and  $F$-hyperbolic modules.   The open mapping  and the closed graph theorems  were stated in \cite{LL} for $F$-module spaces with real-valued metric. The proofs of the following two results are on similar lines as in  \cite[Theorem 1, pp-55] {dusc}.
\setcounter{equation}{0}

\begin{lemma}\label{OMTL}
Let  $T : X \rightarrow Y$  be a   continuous $\mathbb{BC}$-linear map from a $F$-$\mathbb{BC}$ module  $X$ onto  a $F$-$\mathbb{BC}$ module  $Y$. Then the image of a neighbourhood of the origin in $X$ under $T$ contains  a neighbourhood of the origin in $Y$.
\end{lemma}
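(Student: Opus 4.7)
The plan is to adapt the classical proof of the open mapping theorem to the bicomplex $F$-module setting, using the $\mathbb{D}$-metric Baire category theorem (Theorem \ref{BCT}) in place of the usual Baire theorem and exploiting completeness of the $F$-$\mathbb{BC}$ module $X$. The argument naturally breaks into two steps: first, show that $\overline{T(V)}$ is a neighborhood of $0$ in $Y$ for every neighborhood $V$ of $0$ in $X$; second, use completeness of $X$ to remove the closure.

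For the first step, start with a neighborhood $V$ of $0$ in $X$, pick $\epsilon >' 0$ with $B(0,\epsilon) \subset V$, and set $U = B(0,\epsilon/2)$. Translation invariance of $d_{\mathbb{D}}$ makes $U$ symmetric ($U = -U$), and the triangle inequality gives $U + U \subset V$. Continuity of scalar multiplication (together with Theorem \ref{Hb2} style absorbing arguments) guarantees that $U$ is absorbed by positive integer scalars, so $X = \bigcup_{n \ge 1} nU$; surjectivity of $T$ and $\mathbb{BC}$-linearity then yield $Y = \bigcup_{n \ge 1} n\overline{T(U)}$. Theorem \ref{BCT} produces $n_0$ with $n_0\overline{T(U)}$, and hence $\overline{T(U)}$ itself (since $y \mapsto y/n_0$ is a homeomorphism on $Y$ by Theorem \ref{URT}), containing a nonempty open set. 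Choosing an interior point $y_0 \in \overline{T(U)}$ and using $-y_0 \in \overline{T(U)}$, the translated open neighborhood of $0$ sits inside $\overline{T(U)} + \overline{T(U)} \subset \overline{T(U) + T(U)} = \overline{T(U+U)} \subset \overline{T(V)}$, where the middle equality uses $\mathbb{BC}$-linearity of $T$ and the first inclusion uses continuity of addition.

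For the second step, apply step one to the shrinking balls $V_n = B(0,\epsilon/2^{n+2})$ to obtain hyperbolic radii $\delta_n >' 0$ with $B(0,\delta_n) \subset \overline{T(V_n)}$; one may arrange $\delta_n \to 0$ by replacing $\delta_n$ with its componentwise minimum against a summable real sequence. Given $y \in B(0,\delta_0)$, inductively choose $x_n \in V_n$ with $y - \sum_{k=0}^n T(x_k) \in B(0,\delta_{n+1}) \subset \overline{T(V_{n+1})}$, which is possible because at each stage the residual lies in $\overline{T(V_{n+1})}$ so it can be approximated arbitrarily well by an element of $T(V_{n+1})$. The bounds $\|x_n\|_{\mathbb{D}} <' \epsilon/2^{n+2}$ make the partial sums $s_N = \sum_{n=0}^N x_n$ Cauchy in $X$; completeness of $X$ yields a limit $x$ with $\|x\|_{\mathbb{D}} \leq' \epsilon/2 <' \epsilon$. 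Continuity of $T$ forces $T(x) = y$, and $x \in B(0,\epsilon) \subset V$, so $B(0,\delta_0) \subset T(V)$.

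The main obstacle is in step one: the transfer of interior from $n_0\overline{T(U)}$ to $\overline{T(U)}$ and the identity $\overline{A} + \overline{B} \subset \overline{A + B}$ are classical facts in real topological vector spaces, but here the underlying topology is induced by a $\mathbb{D}$-valued metric and the scalars $\mathbb{BC}$ contain zero divisors, so one must check that continuity of addition and of real (integer) scalar multiplication in the $\mathbb{D}$-metric topology (guaranteed by the $F$-$\mathbb{BC}$ hypothesis together with Theorem \ref{URT}) carry these manipulations. A secondary subtlety is ensuring that the chosen $\delta_n$ tend to $0$ hyperbolically; the idempotent decomposition of $\mathbb{D}$ and compatibility of $\leq'$ with the componentwise structure make this routine.
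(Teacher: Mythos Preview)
Your proposal is correct and follows essentially the same two-step scheme as the paper's proof: a Baire-category argument showing that $\overline{T(V)}$ is a neighbourhood of $0$ (the paper uses $V-V\subset G$ where you use the symmetric ball $U$ with $U+U\subset V$, which amounts to the same thing), followed by a successive-approximation argument exploiting completeness of $X$ to remove the closure. The only cosmetic differences are that the paper works with a general summable hyperbolic sequence $\{\epsilon_i\}$ and lands in $TX_{2\epsilon_0}$, while you use the explicit geometric sequence $\epsilon/2^{n+2}$ and land directly in $T(V)$; the subtleties you flag (transfer of interior under $y\mapsto y/n_0$, $\overline{A}+\overline{B}\subset\overline{A+B}$, and $\delta_n\to 0$) are handled in the paper either implicitly or by the same appeals to Theorem~\ref{URT} and continuity of the module operations.
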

\begin{proof} We prove the lemma in two steps. In Step I we show that  the closure of the image of a neighbourhood of the origin  under $T$ contains  a neighbourhood of the origin. And in Step II we finally prove that the image of a neighbourhood of the origin under $T$ contains  a neighbourhood of the origin. \\
$\underline{Step\; I}$: Let $G$ be a neighbourhood of $0$ in $X$. Since $X$ is a topological $\mathbb{BC}$-module, any algebraic combination of variables $x_1 , x_2$ is continuous as a map from $X \times X$ into $X$, so  there is a neighbourhood $V$ of $0$ in $X$ such that $V-V \subset G$. We see that for each $x \in X$ there exists $n \in \mathbb{N}$ such that $x \in n V$. We, therefore, have $X= \bigcup\limits_{n=1}^{\infty}n V$. Since $T$ is surjective and  $\mathbb{BC}$-linear it follows that $$Y= T(X) = T \left ( \bigcup_{n=1}^{\infty}n V \right )= \bigcup_{n=1}^{\infty}n T V  =  \bigcup_{n=1}^{\infty}\overline{n T V}.$$ By Theorem \ref{BCT}, one of the sets $\overline{n T V}$ contains a non-empty open set. This implies that $\overline{ T V}$ also contains a non-empty open set, say, $U$. It follows that, $$ \overline{ T G} \supseteq \overline{ T V-TV} \supseteq \overline{ T V}-  \overline{TV}  \supseteq U-U.$$ Clearly, the set $u- U$ is open, so $U-U =  \bigcup\limits_{u \in U}(u-U)$ is an open set containing 0. Denote the set  $U-U$ by $W$. Then $W$ is a neighbourhood of $0$ in $Y$ such that $W \subset \overline{ T G}.$\\
$\underline{Step\; II}$:
For any $\epsilon>'0$, denote $$X_{\epsilon}= \{ x \in X \; :\; ||x||_{\mathbb{D}} <' \epsilon\},$$ and $$ Y_{\epsilon}= \{ y \in Y \; :\; ||y||_{\mathbb{D}} <' \epsilon\}.$$  Let $\epsilon_{0}>' 0$ be arbitrary, and let $\{ \epsilon_{i}\}_{i=1}^{\infty}$ be a sequence of positive hyperbolic numbers such that $\sum\limits_{i=1}^{\infty} \epsilon_{i}<' \epsilon_{0}.$ By Step I, there is  a sequence $\{ \eta_{i}\}_{i=0}^{\infty}$ of positive hyperbolic numbers tending to $0$ such that 
\begin{equation}\label{OMTL1}
Y_{\eta_{i}} \subset  \overline{ T X_{\epsilon_{i}}}, \;\;\;\;\;\;\;\;\;\;\;\;\;\;\; i=0,\;1\;2,\;.\;.\;.\;.
\end{equation}
Let $y \in Y_{\eta_{0}}$. Then, from (\ref{OMTL1}), with $i=0$, there is an $x_{0} \in  X_{\epsilon_{0}}$ such that $||y-Tx_{0}||_{\mathbb{D}} <' \eta_{1}.$ Thus, $y-Tx_{0} \in Y_{\eta_{1}}$. Again,  from (\ref{OMTL1}), with $i=1$, there is an $x_{1} \in  X_{\epsilon_{1}}$ such that $||y-Tx_{0}-Tx_{1}||_{\mathbb{D}} <' \eta_{2}.$ In $(n+1)$th step we find an $x_{n}$ such that   \begin{equation}\label{OMTL2}
||y-\sum_{i=0}^{n}Tx_{i}||_{\mathbb{D}} <' \eta_{n+1}, \;\;\;\;\;\;\;\;\;\;\;\;\;\;\; n=0,\;1\;2,\;.\;.\;.\;.
\end{equation}
Let $z_n = x_0 + x_1 + x_2 + \;. \;.\;.\;+x_n$. For, $n>m$, we have
 $$ ||z_n - z_m||_{\mathbb{D}} \leq' \sum\limits_{i=m+1}^{n}||x_i||_{\mathbb{D}} <  \sum\limits_{i=m+1}^{\infty} \epsilon_{i} <' \epsilon_{0}.$$ Thus, $\{z_n\}$ is a Cauchy sequence in $X$ and it converges to a point, say, $x \in X$. Then, $$  ||x||_{\mathbb{D}} = \lim_{n \rightarrow \infty} ||z_n ||_{\mathbb{D}} \leq ' \lim_{n \rightarrow \infty} \sum\limits_{i=0}^{n} \epsilon_{i} <' 2 \epsilon_{0}.$$
This shows that $x \in X_{2\epsilon_{0}}$. Also, since $T$ is continuous, $Tz_n \rightarrow Tx$ and from (\ref{OMTL2}), we see that  $Tx=y$. Thus we have shown that $Y_{\eta_{0}} \subset TX_{2\epsilon_{0}}.$ 
\end{proof} 
We now prove the  open mapping theorem for  $F$-$\mathbb{BC}$ modules. 
\begin{theorem}\label{OMT}
Let  $T : X \rightarrow Y$  be a   continuous $\mathbb{BC}$-linear map from a $F$-$\mathbb{BC}$ module  $X$ onto  a $F$-$\mathbb{BC}$ module  $Y$. Then $T$  is an open map.
\end{theorem}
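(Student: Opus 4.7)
The plan is to deduce the theorem from Lemma \ref{OMTL} by a standard translation argument, using the fact that the $\mathbb{D}$-valued metric on $Y$ is translation invariant and that $T$ is $\mathbb{BC}$-linear. Concretely, I would take an arbitrary open set $G \subset X$ and show that every point of $T(G)$ is an interior point.

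First, pick any $y \in T(G)$ and choose $x \in G$ with $T(x) = y$. Since $G$ is open in $X$ and $x \in G$, the translated set $G - x$ is a neighbourhood of $0$ in $X$ (translation invariance of $d_{\mathbb{D}}$ on $X$, which follows from the $F$-$\mathbb{BC}$ module structure, guarantees that translation is a homeomorphism, so open sets translate to open sets containing $0$). By Lemma \ref{OMTL} applied to the neighbourhood $G - x$ of $0$, the image $T(G - x)$ contains some neighbourhood $W$ of $0$ in $Y$.

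Next, I would use $\mathbb{BC}$-linearity to rewrite
\[
T(G - x) \;=\; T(G) - T(x) \;=\; T(G) - y,
\]
so that $T(G) - y \supset W$, i.e., $T(G) \supset y + W$. Since translation on $Y$ is likewise a homeomorphism (by translation invariance of the $\mathbb{D}$-metric on $Y$), $y + W$ is a neighbourhood of $y$ in $Y$. Hence $y$ is an interior point of $T(G)$.

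Because $y \in T(G)$ was arbitrary, $T(G)$ is open in $Y$, and since $G$ was an arbitrary open set in $X$, $T$ is an open map. There is no real obstacle here: all the technical work has already been absorbed into Lemma \ref{OMTL}, whose proof invoked the Baire category theorem (Theorem \ref{BCT}) and the completeness of $X$; the present step is simply the routine reduction from neighbourhoods of $0$ to neighbourhoods of arbitrary points, enabled by $\mathbb{BC}$-linearity of $T$ and translation invariance of the two $\mathbb{D}$-metrics.
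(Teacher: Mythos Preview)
Your proposal is correct and follows essentially the same approach as the paper: both arguments pick $y = Tx \in T(G)$, use that $G - x$ (or a neighbourhood $U$ with $x + U \subset G$) is a neighbourhood of $0$, invoke Lemma~\ref{OMTL} to get a neighbourhood of $0$ in $Y$ inside its image, and then translate back by $y$ using $\mathbb{BC}$-linearity and translation invariance. The only cosmetic difference is that the paper first passes to a smaller neighbourhood $U \subset G - x$ before applying the lemma, whereas you apply it directly to $G - x$; this makes no substantive difference.
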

\begin{proof}
Let $G$ be a non-empty open set in $X$. To show that $TG$ is open in $Y$, let $y=Tx \in TG$. Since $G$ is open, there exists a neighbourhood $U$ of $0$ in $X$ such that $x + U \subset G$. By Lemma \ref{OMTL}, there is a neighbourhood $V$ of $0$ in $Y$ such that $ V \subset TU$. Then $y+V$ is a neighbourhood of $y$ such that $$ y+ V =Tx+V \subset  Tx + TU =  T(x+U) \subset TG.$$ This shows that $TG$ is open in $Y$ and hence $T$ is an open map.
\end{proof}

The next result is the inverse mapping theorem for  $F$-$\mathbb{BC}$ modules.
\begin{theorem} \label{IMT}
Let  $T : X \rightarrow Y$  be a   continuous bijective $\mathbb{BC}$-linear map from a $F$-$\mathbb{BC}$ module  $X$ to  a $F$-$\mathbb{BC}$ module  $Y$   Then $T$ has a continuous $\mathbb{BC}$-linear inverse.
\end{theorem}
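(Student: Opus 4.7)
The plan is to deduce this theorem directly from the Open Mapping Theorem (Theorem \ref{OMT}), which has just been proved; once that is in hand, the statement reduces to two essentially routine verifications, and there is no serious obstacle to overcome.

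First, since $T$ is a bijection, the set-theoretic inverse $S := T^{-1} : Y \to X$ exists. I would begin by verifying that $S$ is $\mathbb{BC}$-linear. Given $y_1,y_2 \in Y$ and $\alpha,\beta \in \mathbb{BC}$, set $x_i := S(y_i)$, so that $T(x_i)=y_i$. By the $\mathbb{BC}$-linearity of $T$, we have $T(\alpha x_1 + \beta x_2) = \alpha T(x_1) + \beta T(x_2) = \alpha y_1 + \beta y_2$, and applying $S$ to both sides gives $S(\alpha y_1 + \beta y_2) = \alpha x_1 + \beta x_2 = \alpha S(y_1) + \beta S(y_2)$. This step requires nothing beyond bijectivity and the linearity of $T$.

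Next, I would establish continuity of $S$ via the Open Mapping Theorem. Since $T$ is continuous, $\mathbb{BC}$-linear, and surjective (because it is bijective) between $F$-$\mathbb{BC}$ modules, Theorem \ref{OMT} applies and yields that $T$ is an open map. To check continuity of $S$, let $U \subset X$ be open; then $S^{-1}(U) = T(U)$, which is open in $Y$ by the open mapping property of $T$. Hence $S$ is continuous at every point of $Y$.

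Combining these two steps gives a continuous $\mathbb{BC}$-linear inverse, completing the proof. The main work has really been done inside Lemma \ref{OMTL} and Theorem \ref{OMT}; the only genuine content here is the observation that open $+$ bijective $\Longleftrightarrow$ continuous inverse, plus the trivial transfer of $\mathbb{BC}$-linearity under inversion. No new estimates involving the $\mathbb{D}$-valued norm are needed, and there is no conceptual obstacle to anticipate.
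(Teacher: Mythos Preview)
Your proposal is correct and follows essentially the same approach as the paper: first verify $\mathbb{BC}$-linearity of $T^{-1}$ directly from the linearity and bijectivity of $T$, then deduce continuity of $T^{-1}$ from the Open Mapping Theorem via the identity $(T^{-1})^{-1}(U)=T(U)$. The only cosmetic difference is that the paper checks additivity and homogeneity in two separate steps rather than in one combined computation.
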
 
\begin{proof}
To show that $T^{-1}$ is $\mathbb{BC}$-linear, let $y_1, y_2 \in Y$ and $x_1, x_2 \in X$ such that $T(x_1)=y_1$ and $T(x_2)=y_2$. Then $$ T(x_1 +x_2)= T(x_1)+T(x_2)= y_1 + y_2.$$ Therefore, $$T^{-1}( y_1 + y_2) =x_1 +x_2 = T^{-1}( y_1) + T^{-1}( y_2).$$ Also, for any $\lambda \in \mathbb{BC}$, we have, $$T(\lambda x_1 )= \lambda T(x_1) =\lambda y_1. $$
Thus, $$ T^{-1}( \lambda y_1) = \lambda x_1=\lambda T^{-1}( y_1).$$ Now to show that $T^{-1}$ is continuous, it is enough to show that the inverse image of an open set under $T^{-1}$ is open. So let $M \subset X$ be an open set. By Theorem \ref{OMT}, $(T^{-1})^{-1}=T$ maps open sets onto open sets, hence  $(T^{-1})^{-1}(M)$ is open in $Y$. This completes the proof.

\end{proof}

\begin{definition}
 Let $X$ and $Y$ be two $F$-$\mathbb{BC}$ modules.
Let $T$ be a $\mathbb{BC}$-linear map  whose domain $D(T)$ defined as
$$
D(T)= \{ x \in X : Tx \in Y \}
$$
is a $\mathbb{BC}$-submodule in $X$ and whose range lies in $Y$. Then the graph of $T$ is the set of all points in $X \times Y$ of the form 
$( x , \; Tx )$ with $x \in D(T).$ 
\end{definition}
A $\mathbb{BC}$-linear operator $T$ is said to be closed if its graph is closed in the product space $X \times Y.$ An equivalent statement is 
as follows: \\

A $\mathbb{BC}$-linear operator $T$ is closed if whenever $x_n \in D(T), \; x_n \longrightarrow x, \; T x_n \longrightarrow y \Rightarrow  x \in D(T) $ and $Tx =y.$ 

Note that the product $X \times Y$ of two $F$-$\mathbb{BC}$ modules $X$ and $Y$  is also an $F$-$\mathbb{BC}$ module. \\

The next result is closed graph theorem in the setting of  $F$-$\mathbb{BC}$ modules.

\begin{theorem} \label{CGT}
Let $X$ and $Y$ be $F$-$\mathbb{BC}$ modules and $T : X \rightarrow Y$  be a  closed $\mathbb{BC}$-linear map.  Then $T$ is continuous.
\end{theorem}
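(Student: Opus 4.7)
The plan is to deduce the closed graph theorem from the inverse mapping theorem (Theorem \ref{IMT}) by viewing $T$ as the composition of two continuous $\mathbb{BC}$-linear maps through the graph. First, I would consider the graph
\[
G(T) = \{(x, Tx) : x \in X\} \subset X \times Y.
\]
Since $T$ is closed, $G(T)$ is a closed $\mathbb{BC}$-submodule of the $F$-$\mathbb{BC}$ module $X \times Y$ (the remark immediately before the statement records that $X\times Y$ is an $F$-$\mathbb{BC}$ module). The $\mathbb{D}$-valued metric on $X \times Y$ restricts to $G(T)$, and since a closed subset of a complete $\mathbb{D}$-metric space is itself complete, $G(T)$ inherits the structure of an $F$-$\mathbb{BC}$ module. (Translation invariance and continuity of scalar multiplication pass automatically to the submodule.)

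Next I would introduce the coordinate projections $\pi_X : G(T) \to X$ and $\pi_Y : G(T) \to Y$ defined by $\pi_X(x, Tx) = x$ and $\pi_Y(x, Tx) = Tx$. Both are $\mathbb{BC}$-linear, and both are continuous because the metric on $X \times Y$ dominates the metrics on each factor (so that $\|(x, Tx)\|_{\mathbb{D}}$ controls $\|x\|_{\mathbb{D}}$ and $\|Tx\|_{\mathbb{D}}$ by a routine inequality in the chosen product $\mathbb{D}$-metric). Moreover, $\pi_X$ is a bijection from $G(T)$ onto $X$: surjectivity is the definition of the graph, and injectivity follows because $(x, Tx) = (x', Tx')$ forces $x = x'$.

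At this point I invoke the inverse mapping theorem (Theorem \ref{IMT}) for the continuous bijective $\mathbb{BC}$-linear map $\pi_X : G(T) \to X$ between $F$-$\mathbb{BC}$ modules. It yields that $\pi_X^{-1} : X \to G(T)$, $x \mapsto (x, Tx)$, is continuous. Then $T = \pi_Y \circ \pi_X^{-1}$, being the composition of two continuous $\mathbb{BC}$-linear maps, is itself continuous, which is the desired conclusion.

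The only delicate point I expect is the verification that $G(T)$ is genuinely an $F$-$\mathbb{BC}$ module in the sense of the paper's definition, i.e., that completeness of the product $\mathbb{D}$-metric restricts to $G(T)$ once $G(T)$ is closed; this is a standard argument in the real setting, but one must be careful that $\mathbb{D}$-Cauchy sequences (as defined via $<'$) behave as expected under the partial order, which follows because a $\mathbb{D}$-Cauchy sequence is Cauchy in each of the two real components $e_1 X$ and $e_2 X$, and the same holds for $X \times Y$. Once this is in place, the composition argument is clean.
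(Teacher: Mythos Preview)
Your argument is correct and follows essentially the same route as the paper: the graph $G(T)$ is a closed $\mathbb{BC}$-submodule of the $F$-$\mathbb{BC}$ module $X\times Y$, hence itself an $F$-$\mathbb{BC}$ module, and the inverse mapping theorem applied to the bijective continuous projection $\pi_X$ gives $T=\pi_Y\circ\pi_X^{-1}$ continuous. The only difference is that you spell out more carefully why $G(T)$ inherits the $F$-$\mathbb{BC}$ module structure, which the paper takes as evident.
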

\begin{proof}
Clearly the graph $G$ of $T$ is a closed $\mathbb{BC}$-submodule in the product $F$-$\mathbb{BC}$ module $X \times Y$, hence $G$ is a complete $\mathbb{D}$-metric space. Thus $G$ is an $F$-$\mathbb{BC}$ module. The map $p_X : (x , \; Tx) \mapsto x $ of $G$ onto $X$ is one-to-one, $\mathbb{BC}$-linear, and continuous. Hence, by Theorem \ref{IMT}, its inverse $p_{X}^{-1}$ is continuous. Thus $T= p_Y p_{X}^{-1}$ is continuous.
\end{proof}
Theorems \ref{OMT}, \ref{IMT} and \ref{CGT}  in the setting of $F$-$\mathbb{D}$ modules have been stated below:
\begin{theorem}
Let  $T : X \rightarrow Y$  be a   continuous $\mathbb{D}$-linear map from a $F$-$\mathbb{D}$ module  $X$ onto  a $F$-$\mathbb{D}$ module  $Y$. Then $T$  is an open map.
\end{theorem}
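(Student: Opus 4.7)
The plan is to repeat, almost verbatim, the argument used to prove Theorem \ref{OMT} (the $F$-$\mathbb{BC}$ open mapping theorem), replacing $\mathbb{BC}$ by $\mathbb{D}$ throughout. The bicomplex proof never used any property of $\mathbb{BC}$ beyond three ingredients that $F$-$\mathbb{D}$ modules inherit by hypothesis: joint continuity of scalar multiplication, the fact that every neighbourhood of $0$ is absorbing (Theorem \ref{Hb2}), and completeness of the $\mathbb{D}$-metric so that the Baire category theorem for $\mathbb{D}$-metric spaces (Theorem \ref{BCT}) applies. An alternative route would be to exploit the idempotent decomposition $T = e_1 T_1 + e_2 T_2$ with each $T_l : e_l X \to e_l Y$ continuous $\mathbb{R}$-linear and surjective, and then invoke the classical real open mapping theorem on each factor; but I prefer the direct route, in keeping with the paper's style.

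I would first establish the $\mathbb{D}$-analogue of Lemma \ref{OMTL}. Given a neighbourhood $G$ of $0$ in $X$, continuity of subtraction yields a neighbourhood $V$ of $0$ with $V - V \subset G$; by $\mathbb{D}$-absorbedness, $X = \bigcup_{n=1}^{\infty} nV$, so surjectivity and $\mathbb{D}$-linearity of $T$ give $Y = \bigcup_{n=1}^{\infty} \overline{nTV}$. Theorem \ref{BCT} supplies an $n_0$ for which $\overline{n_0 TV}$ has non-empty interior, hence $\overline{TV}$ itself has non-empty interior $U$, and then $U - U \subset \overline{TG}$ is a neighbourhood of $0$ inside $\overline{TG}$. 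The closure is removed by the usual iteration: fixing $\epsilon_0 >' 0$ and choosing hyperbolic radii $\epsilon_i >' 0$ with $\sum_{i=1}^{\infty} \epsilon_i <' \epsilon_0$, one builds sequences $x_n \in X_{\epsilon_n}$ and $\eta_n \to 0$ satisfying $\left\| y - \sum_{i=0}^n T x_i \right\|_{\mathbb{D}} <' \eta_{n+1}$; completeness of the $\mathbb{D}$-metric then produces a limit $x \in X_{2\epsilon_0}$ with $Tx = y$, establishing $Y_{\eta_0} \subset T X_{2\epsilon_0}$.

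With the lemma in hand, openness of $T$ is immediate: given an open $G \subset X$ and $y = Tx \in T(G)$, choose a neighbourhood $U$ of $0$ with $x + U \subset G$; the lemma produces a neighbourhood $V$ of $0$ in $Y$ with $V \subset T(U)$, and then $y + V \subset T(x + U) \subset T(G)$. The only point that needs checking is the arithmetic of the partial order $<'$ throughout the geometric-series construction, which is the place where one might worry about zero divisors in $\mathbb{D}$; but since $\alpha <' \beta$ decomposes componentwise into two ordinary real strict inequalities under the idempotent representation, majorisations such as $\sum_{i=n+1}^{\infty} 1/2^i <' 1/2^n$ are valid componentwise, and no genuine new obstacle arises. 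Thus there is no essential difficulty beyond pointing out that every tool invoked in the bicomplex proof is already available in the $\mathbb{D}$-setting.
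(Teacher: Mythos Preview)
Your proposal is correct and matches the paper's approach exactly: the paper proves Lemma~\ref{OMTL} and Theorem~\ref{OMT} in the $F$-$\mathbb{BC}$ setting and then simply restates the $F$-$\mathbb{D}$ version without a separate proof, implicitly relying on the observation you make explicit---that nothing in the argument uses properties of $\mathbb{BC}$ beyond continuity of scalar multiplication, absorbedness of neighbourhoods of $0$, and completeness of the $\mathbb{D}$-metric, all of which hold for $F$-$\mathbb{D}$ modules.
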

\begin{theorem} 
Let  $T : X \rightarrow Y$  be a   continuous bijective $\mathbb{D}$-linear map from a $F$-$\mathbb{D}$ module  $X$ to  a $F$-$\mathbb{D}$ module  $Y$   Then $T$ has a continuous $\mathbb{D}$-linear inverse.
\end{theorem}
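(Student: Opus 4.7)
The plan is to transcribe the proof of Theorem \ref{IMT} into the hyperbolic setting, replacing $\mathbb{BC}$ by $\mathbb{D}$ throughout. The hyperbolic version of the open mapping theorem has just been stated as the preceding theorem (its proof is identical to that of Theorem \ref{OMT}, since all the ingredients---the Baire category theorem for $\mathbb{D}$-metric spaces, Lemma \ref{OMTL}, and the $F$-$\mathbb{D}$ module structure---have already been established). Bijectivity of $T$ guarantees that the set-theoretic inverse $T^{-1}\colon Y\to X$ exists, so the only things to verify are that $T^{-1}$ is $\mathbb{D}$-linear and that it is continuous.

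For $\mathbb{D}$-linearity, I would pick arbitrary $y_1,y_2\in Y$ and use surjectivity to choose $x_1,x_2\in X$ with $T(x_i)=y_i$. Additivity of $T$ gives $T(x_1+x_2)=y_1+y_2$, so $T^{-1}(y_1+y_2)=x_1+x_2=T^{-1}(y_1)+T^{-1}(y_2)$. For any $\lambda\in\mathbb{D}$, the $\mathbb{D}$-linearity of $T$ gives $T(\lambda x_1)=\lambda y_1$, hence $T^{-1}(\lambda y_1)=\lambda x_1=\lambda T^{-1}(y_1)$. Injectivity of $T$ is used implicitly to ensure these preimages are uniquely determined.

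For continuity of $T^{-1}$, I would invoke the standard topological criterion: it suffices to show that $(T^{-1})^{-1}(M)$ is open in $Y$ for every open $M\subset X$. But $(T^{-1})^{-1}(M)=T(M)$, and by the open mapping theorem for $F$-$\mathbb{D}$ modules (the theorem stated immediately before this one), the continuous surjective $\mathbb{D}$-linear map $T$ carries open sets to open sets. Hence $T(M)$ is open in $Y$, which gives continuity of $T^{-1}$.

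There is no substantive obstacle here; the entire content of the statement is already packed into the $F$-$\mathbb{D}$ open mapping theorem. The only point requiring care is verifying that the hypotheses of that theorem are met---$T$ continuous, $\mathbb{D}$-linear, and surjective onto $Y$---which is furnished directly by the bijectivity and continuity assumptions on $T$ combined with its $\mathbb{D}$-linearity.
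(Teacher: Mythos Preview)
Your proposal is correct and follows exactly the approach the paper takes: the paper simply states this $\mathbb{D}$-version without a separate proof, indicating that one transcribes the proof of Theorem~\ref{IMT} with $\mathbb{BC}$ replaced by $\mathbb{D}$ and invokes the preceding $F$-$\mathbb{D}$ open mapping theorem in place of Theorem~\ref{OMT}. Your verification of $\mathbb{D}$-linearity and continuity of $T^{-1}$ mirrors the paper's argument line by line.
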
 

\begin{theorem}
Let $X$ and $Y$ be $F$-$\mathbb{D}$ modules and $T : X \rightarrow Y$  be a  closed $\mathbb{D}$-linear map.  Then $T$ is continuous.
\end{theorem}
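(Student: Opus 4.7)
The plan is to mirror the proof of Theorem \ref{CGT} verbatim, replacing the $F$-$\mathbb{BC}$ machinery by its $F$-$\mathbb{D}$ counterpart, all of which has been established in the preceding theorems. The first step is to set up the product $X \times Y$ as an $F$-$\mathbb{D}$ module: equip it with the coordinatewise $\mathbb{D}$-module operations and with the translation-invariant $\mathbb{D}$-valued metric $d_\mathbb{D}((x,y),(x',y')) = d_\mathbb{D}(x,x') + d_\mathbb{D}(y,y')$. Completeness and continuity of scalar multiplication transfer from $X$ and $Y$ coordinatewise, so $X \times Y$ is indeed an $F$-$\mathbb{D}$ module.

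Next I would look at the graph $G = \{(x, Tx) : x \in X\}$ of $T$. Since $T$ is $\mathbb{D}$-linear, $G$ is a $\mathbb{D}$-submodule of $X \times Y$; since $T$ is closed, $G$ is a closed subset of $X \times Y$. A closed subset of a complete $\mathbb{D}$-metric space is itself a complete $\mathbb{D}$-metric space (the argument is exactly as in the real case: any Cauchy sequence in $G$ converges in $X \times Y$, and the limit lies in $G$ because $G$ is closed). Therefore $G$, with the inherited $\mathbb{D}$-valued metric, is an $F$-$\mathbb{D}$ module in its own right.

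Now consider the coordinate projections $p_X \colon G \to X$, $(x, Tx) \mapsto x$, and $p_Y \colon G \to Y$, $(x, Tx) \mapsto Tx$. Both are $\mathbb{D}$-linear and continuous (continuity follows from the definition of $d_\mathbb{D}$ on $X \times Y$). Moreover, $p_X$ is a bijection from $G$ onto $X$. By the inverse mapping theorem for $F$-$\mathbb{D}$ modules (the hyperbolic analogue of Theorem \ref{IMT} stated just above), $p_X^{-1} \colon X \to G$ is continuous. Consequently $T = p_Y \circ p_X^{-1}$ is a composition of continuous maps, hence continuous.

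The only potentially subtle point is verifying that all the intermediate objects (the product $X \times Y$, the closed submodule $G$) really are $F$-$\mathbb{D}$ modules, so that the inverse mapping theorem for $F$-$\mathbb{D}$ modules applies. But this is routine: translation invariance and the triangle inequality for $d_\mathbb{D}$ on $X \times Y$ follow coordinatewise from $X$ and $Y$, and continuity of $\cdot \colon \mathbb{D} \times (X\times Y) \to X\times Y$ reduces to continuity of $\cdot$ on each factor. Once this bookkeeping is done, the argument is a direct transcription of the $F$-$\mathbb{BC}$ proof.
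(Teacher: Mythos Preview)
Your proof is correct and follows essentially the same route as the paper: the paper's argument for the $F$-$\mathbb{BC}$ case (Theorem~\ref{CGT}) passes to the graph $G$, observes it is an $F$-module, and then applies the inverse mapping theorem to the projection $p_X$ to write $T = p_Y p_X^{-1}$, which is exactly what you do (with even more care in checking that the product and the graph are $F$-$\mathbb{D}$ modules). The $\mathbb{D}$-version in the paper is stated without a separate proof, so your transcription is precisely what is intended.
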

\end{section}

\begin{section}{Hahn Banach Separation Theorem for Topological  Hyperbolic Modules } 
In this section, we present a proof of the Hahn Banach separation theorem for topological  hyperbolic modules.
\begin{theorem}\label{Thhb4}
Let $X$ be a topological $\mathbb{D}$-module and  $A, B$ be non-empty $\mathbb{D}$-convex subsets of $X$ such that $e_1 A \cap e_1 B = e_2 A \cap e_2 B = \emptyset.$ If $A$ is open, then there exist a continuous $\mathbb{D}$-linear functional $f$ on $X$ and $\gamma \in \mathbb{D}$ such that  $$ f(a) <' \gamma \leq' f(b),$$ for each $a \in A$ and $b \in B.$
\end{theorem}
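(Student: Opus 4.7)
The plan is to reduce the statement to two parallel applications of the classical real Hahn--Banach separation theorem on the two real components of the idempotent decomposition, and then reassemble the pieces using the $\mathbb{D}$-linearity structure already available.

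First I would pass to the idempotent decomposition $X=e_1X_1+e_2X_2$ where $X_1=e_1X$ and $X_2=e_2X$ are topological $\mathbb{R}$-vector spaces (by the discussion right after Definition~\ref{d1}). Because $A$ and $B$ are $\mathbb{D}$-convex, Theorem~\ref{Hb1} gives the component decompositions $A=e_1A+e_2A$ and $B=e_1B+e_2B$, and the sets $e_\ell A$, $e_\ell B$ are ordinary real-convex subsets of $X_\ell$ for $\ell=1,2$. Since $A$ is open in $X$, each $e_\ell A$ is open in $X_\ell$ (as the projection induced by multiplication by $e_\ell$ is continuous and open in the product structure). The hypothesis $e_\ell A\cap e_\ell B=\emptyset$ is precisely what is needed to separate the components.

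Next I would invoke the classical real Hahn--Banach separation theorem in each topological $\mathbb{R}$-vector space $X_\ell$ to obtain continuous $\mathbb{R}$-linear functionals $f_\ell:X_\ell\to\mathbb{R}$ and real numbers $\gamma_\ell$ with
\[
f_\ell(a_\ell)<\gamma_\ell\le f_\ell(b_\ell)\qquad\text{for all }a_\ell\in e_\ell A,\ b_\ell\in e_\ell B.
\]
Then I would assemble
\[
f(x)=e_1f_1(e_1x)+e_2f_2(e_2x),\qquad \gamma=e_1\gamma_1+e_2\gamma_2.
\]
A direct computation using $e_1e_2=0$, $e_\ell^2=e_\ell$, and the $\mathbb{R}$-linearity of $f_1,f_2$ shows that $f$ is additive and satisfies $f(\alpha x)=\alpha f(x)$ for every $\alpha=e_1\alpha_1+e_2\alpha_2\in\mathbb{D}$, so $f$ is $\mathbb{D}$-linear. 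Continuity of $f$ follows from Theorem~\ref{Thhb1}(i), since $f_1|_{X_1}=f_1$ and $f_2|_{X_2}=f_2$ are continuous by construction.

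Finally, for the separation inequality, any $a\in A$ and $b\in B$ decompose as $a=e_1a_1+e_2a_2$, $b=e_1b_1+e_2b_2$ with $a_\ell\in e_\ell A$ and $b_\ell\in e_\ell B$. Then
\[
\gamma-f(a)=e_1(\gamma_1-f_1(a_1))+e_2(\gamma_2-f_2(a_2))
\]
has both idempotent coordinates strictly positive by the componentwise separation, so $f(a)<'\gamma$; the same computation with $b$ gives $\gamma\le'f(b)$. The only subtle point is to be careful that the classical separation theorem requires real topological vector spaces with one convex set open and the two sets disjoint, and both of those conditions transfer cleanly from $X$ to each $X_\ell$ thanks to Theorem~\ref{Hb1} and the topology structure discussed after Definition~\ref{d1}; this is where the somewhat stronger hypothesis $e_1A\cap e_1B=e_2A\cap e_2B=\emptyset$ (rather than merely $A\cap B=\emptyset$) is essential, and is the main reason the proof works at all in the hyperbolic setting.
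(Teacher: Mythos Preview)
Your argument is correct but follows a genuinely different route from the paper's. The paper works \emph{intrinsically} in the $\mathbb{D}$-module: it forms the single set $G=A-B+x_0$ with $x_0=b_0-a_0$, verifies that $G$ is an open $\mathbb{D}$-convex $\mathbb{D}$-absorbing neighbourhood of $0$, and uses the hyperbolic Minkowski functional $q_G$ together with the hypothesis $e_\ell A\cap e_\ell B=\emptyset$ to see that $q_G(x_0)\ge'1$. It then defines $g(\lambda x_0)=\lambda$ on the one-dimensional $\mathbb{D}$-submodule, checks the domination $g\le'q_G$ by a case analysis on the signs of the idempotent coordinates of $\lambda$, and extends via the hyperbolic Hahn--Banach extension theorem \cite[Theorem~5]{Hahn}; continuity and the strict inequality come from Theorems~\ref{Thhb2} and~\ref{Thhb3}. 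You instead split everything along the idempotent decomposition, apply the classical real separation theorem once in each $X_\ell$, and reassemble via Theorem~\ref{Thhb1}. Your route is shorter and avoids the hyperbolic Minkowski-functional and extension machinery entirely; the paper's route has the virtue of being a direct hyperbolic analogue of the classical proof and of exercising the $\mathbb{D}$-valued tools developed in Section~2 and in \cite{Hahn}. Both approaches use the hypothesis $e_\ell A\cap e_\ell B=\emptyset$ at the same essential point (for you, to make each real separation problem well-posed; for the paper, to force $e_\ell x_0\notin e_\ell G$ and hence $q_G(x_0)\ge'1$).
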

\begin{proof}  Choose $a_0 \in A$ and $b_0 \in B$. Take $G= A-B+x_0$, where $x_0= b_0-a_0$. Then $G$  is an open set in $X$. Clearly, $0 \in G$. Therefore,  $G$ is  $\mathbb{D}$-absorbing in $X$.  Let $x, y \in G$ and $0 \leq' \lambda \leq' 1$. Then, there exist $a_1, a_2 \in A$ and $b_1, b_2 \in B$ such that $x= a_1 - b_1 + x_0$ and $y =  a_2 - b_2 + x_0$. Since $A$ and $B$ are $\mathbb{D}$-convex, 
\begin{eqnarray*}
\lambda x + (1- \lambda)y &=& \lambda ( a_1 - b_1 + x_0)  +  (1- \lambda) ( a_2 - b_2 + x_0)\\
 &=& \lambda a_1  +  (1- \lambda) a_2  +  \lambda b_1  +  (1- \lambda) b_2  + x_0 \in G.
\end{eqnarray*}
Thus, $G$ is a  $\mathbb{D}$-convex, $\mathbb{D}$-absorbing set containing $0$.  Let $q_G$ be the $\mathbb{D}$-Minkowski functional of $G$. Then, there exist real valued Minkowski functionals $$q_{e_1 G} : e_1 X \longrightarrow \mathbb{R}$$ and $$q_{e_2 G} : e_2 X \longrightarrow \mathbb{R}$$ such that $q_G(x) = q_{e_1 G}( e_1 x) e_1 +  q_{e_2 G}( e_2 x) e_2$, for each $x \in X$. Since $e_1 A \cap e_1 B = e_2 A \cap e_2 B = \emptyset$, it follows that $e_1 x_0 \notin e_1 G$ and $e_2 x_0 \notin e_2 G$. Thus, $ q_{e_1 G}( e_1 x_0) \geq 1$ and  $ q_{e_2 G}( e_2 x_0) \geq 1$, which yields  $q_G (x_0) \geq' 1$. Take $Y = \{  \lambda x_0 \;  : \; \lambda \in \mathbb{D}\}$ and define a  $\mathbb{D}$-linear functional $g : Y  \longrightarrow \mathbb{D}$ by $$  g(  \lambda x_0) = \lambda, \;\;\; \textmd{for each} \; \lambda  \in \mathbb{D}.$$ We now show that $g$ is dominated by $q_G$ on $\mathbb{D}$-submodule $Y$ of $X$. For this, write $\lambda \in \mathbb{D}$ as $\lambda = e_1 \lambda_1 +  e_2 \lambda_2$, where $ \lambda_1,  \lambda_2 \in \mathbb {R}$. Then, we have the following cases:\\
Case $(i)$: If $\lambda_1 \geq 0$ and $\lambda_2 \geq 0$, then 
\begin{eqnarray*}
 g(  \lambda x_0) &=&  e_1 \lambda_1 +  e_2 \lambda_2 \\
& \leq' & e_1 \lambda_1  q_{e_1 G}( e_1 x_0)  +  e_2 \lambda_2  q_{e_2 G}( e_2 x_0)  \\
& = &  e_1  q_{e_1 G}( e_1 \lambda_1  x_0)  +  e_2 q_{e_2 G}( e_2  \lambda_2  x_0)  \\
& = &  q_G (\lambda x_0).
\end{eqnarray*}
Case $(ii)$:  If $\lambda_1 < 0$ and $\lambda_2 \geq 0$, then 
\begin{eqnarray*}
 g(  \lambda x_0) &=&  e_1 \lambda_1 +  e_2 \lambda_2 \\
& \leq' & e_1 0 +  e_2 \lambda_2  q_{e_2 G}( e_2 x_0) \\
& \leq' &  e_1  q_{e_1 G}( e_1 \lambda_1  x_0)  +  e_2 q_{e_2 G}( e_2  \lambda_2  x_0)  \\
& = &  q_G (\lambda x_0).
\end{eqnarray*}
Case $(iii)$:  If $\lambda_1 \geq  0$ and $\lambda_2 < 0$, then 
\begin{eqnarray*}
 g(  \lambda x_0) &=&  e_1 \lambda_1 +  e_2 \lambda_2 \\
& \leq' & e_1  \lambda_1  q_{e_1 G}( e_1 x_0) +  e_2 0 \\
& \leq' &  e_1  q_{e_1 G}( e_1 \lambda_1  x_0)  +  e_2 q_{e_2 G}( e_2  \lambda_2  x_0)  \\
& = &  q_G (\lambda x_0).
\end{eqnarray*}
Thus, $g \leq' q_G$ on $Y$. By  \cite [Theorem 5] {Hahn}, there exists a  $\mathbb{D}$-linear functional $f$ on $X$ such that $$ f_{|Y}= g \; \textmd{ and}\; \; f \leq' q_G.$$  Now, $ q_G  \leq' 1$ on $G$ implies that  $ f  \leq' 1$ on $G$. From this, it follows that  $ f  \geq'  -1$ on $-G$. Therefore, we have $|f|_k \leq' 1$ on $  G \cap (-G)$.  Since  $ G \cap (-G)$ is an open set containing $0$, by Theorem \ref{Thhb2},  $f$ is continuous on $X$. Now, let $a \in A$ and $b \in B$. Since $a-b+x_0 \in G$ and $G$ is open, we see that
\begin{eqnarray*}
f(a)-f(b)+1 &=& f(a-b+x_0)\\
&\leq' & q_G (a-b+x_0)\\
&<' & 1,
\end{eqnarray*} 
and so $f(a) <' f(b)$.  From Theorem \ref{Thhb3},  $f(A)$ is open in $ \mathbb{D}$.  Set $\gamma = \inf_{\mathbb{D}} f(B)$, we have, for each $a \in A$, $b \in B$,  $$ f(a) <' \gamma \leq' f(b).$$ 
\end{proof}

\end{section} 


\begin{section} { Hahn Banach Separation Theorem for Topological  Bicomplex Modules } 
 In this section, a generalized version of Hahn Banach separation theorem for topological  bicomplex modules is  proved. 

Let $X$ be a  $\mathbb{BC}$-module and $h$ be a $\mathbb{BC}$-functional on $X$. Then for  each $x \in X$, $$h(x) = g_1 (x)+ i g_2 (x)+ j g_3 (x)+ k g_4 (x), $$ where $g_1, g_2,  g_3$ and  $g_4 $ are $\mathbb{R}$-functionals on $X$. We now define a $\mathbb{D}$-functional on $X$ with the help of $h$ as follows:
For each $x \in X$, define $$h_{\mathbb{D}} (x) =  g_1 (x) + k  g_4 (x).$$  The following two theorems then follow from \cite[Section 4]{Hahn}:
\begin{theorem}
Let $X$ be a  $\mathbb{BC}$-module. Then the following statements hold:
\begin{enumerate}
\item[(i)] Let $h$ be a $\mathbb{BC}$-linear functional on $X$.  Then $h_{\mathbb{D}}$ is a $\mathbb{D}$-linear functional on $X$ and  $$ h(x) =h_{\mathbb{D}}(x)- ih_{\mathbb{D}}(ix), \;\;\; \textmd{for each}\;  x \in X.$$ 
\item[(ii)] Let $f$ be a $\mathbb{D}$-linear functional on $X$ and $h: X \longrightarrow \mathbb{BC}$ be defined by  $$ h(x) =f(x)- if(ix), \;\;\; \textmd{for each}\;  x \in X.$$  Then $h$ is a $\mathbb{BC}$-linear functional on $X$. 
\end{enumerate}
\end{theorem}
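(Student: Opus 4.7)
The plan is to read part (i) essentially off the six-case computation preceding the theorem statement. Writing $h(x) = g_1(x) + ig_2(x) + jg_3(x) + kg_4(x)$ with real-valued additive $g_1,\dots,g_4$, first I would verify that $h_{\mathbb{D}}(x) = g_1(x) + kg_4(x)$ is $\mathbb{D}$-linear. Additivity and $\mathbb{R}$-homogeneity are inherited from $h$; since every $\alpha \in \mathbb{D}$ has the form $a+kb$ with $a,b\in\mathbb{R}$, full $\mathbb{D}$-homogeneity reduces to checking $h_{\mathbb{D}}(kx) = k\,h_{\mathbb{D}}(x)$. Applying $\mathbb{BC}$-linearity of $h$ to $kx$ gives $h(kx) = kg_1(x) - jg_2(x) - ig_3(x) + g_4(x)$, and reading off the $1$- and $k$-components yields $h_{\mathbb{D}}(kx) = g_4(x) + kg_1(x) = k\,h_{\mathbb{D}}(x)$. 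For the recovery formula, I would compute $h(ix) = ih(x) = ig_1 - g_2 + kg_3 - jg_4$, so that $h_{\mathbb{D}}(ix) = -g_2 + kg_3$; then $-i\,h_{\mathbb{D}}(ix) = ig_2 + jg_3$, whose sum with $h_{\mathbb{D}}(x) = g_1 + kg_4$ is precisely $h(x)$.

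For part (ii), additivity of $h(x) = f(x) - if(ix)$ is immediate from additivity of $f$. Since $\{1,i,j,k\}$ is an $\mathbb{R}$-basis of $\mathbb{BC}$ and both $\mathbb{R}$ and $k$ lie in $\mathbb{D}$, $\mathbb{D}$-linearity of $f$ already delivers $\mathbb{R}$- and $k$-homogeneity of $h$, so only $i$- and $j$-homogeneity remain. The $i$-case is a direct calculation: $h(ix) = f(ix) - if(i^2 x) = f(ix) + if(x) = i(f(x) - if(ix)) = ih(x)$.

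The main obstacle is the $j$-homogeneity step, since $j\notin\mathbb{D}$ and so $f(jx)$ is not directly controlled by the hypothesis. I would handle it by expanding $h(jx) = f(jx) - if(ijx) = f(jx) - ikf(x) = f(jx) + jf(x)$ and $jh(x) = jf(x) - jif(ix) = jf(x) - kf(ix)$, so that $h(jx) = jh(x)$ reduces to the identity $f(jx) = -kf(ix)$. To establish this identity I would exploit the relation $k\cdot(ix) = -jx$: by $\mathbb{D}$-linearity, $f(k(ix)) = kf(ix)$, while also $f(k(ix)) = f(-jx) = -f(jx)$, whence $f(jx) = -kf(ix)$. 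This closes the proof of (ii).
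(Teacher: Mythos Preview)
Your proposal is correct and follows essentially the same route as the paper: part (i) is exactly the case-(i) computation preceding the theorem, and for part (ii) your key identity $f(jx)=-k\,f(ix)$, obtained from $k\cdot(ix)=-jx$ and $\mathbb{D}$-linearity, is precisely what the paper uses (written there as $f(jx)=f(-ikx)=-k\,f(ix)$) to collapse $h(jx)$ to $j\,h(x)$.
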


\begin{theorem}
Let $X$ be a  $\mathbb{BC}$-module. Then the following statements hold:
\begin{enumerate}
\item[(i)] Let $h$ be a $\mathbb{BC}$-linear functional on $X$.  Then $h_{\mathbb{D}}$ is a $\mathbb{D}$-linear functional on $X$ and  $$ h(x) =h_{\mathbb{D}}(x)- jh_{\mathbb{D}}(jx), \;\;\; \textmd{for each}\;  x \in X.$$ 
\item[(ii)] Let $f$ be a $\mathbb{D}$-linear functional on $X$ and $h: X \longrightarrow \mathbb{BC}$ be defined by  $$ h(x) =f(x)- jf(jx), \;\;\; \textmd{for each}\;  x \in X.$$  Then $h$ is a $\mathbb{BC}$-linear functional on $X$. 
\end{enumerate}
\end{theorem}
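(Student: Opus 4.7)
The statement is the $j$-analog of the theorem immediately preceding it, and the long discussion in Section 7 was organized so that both statements could be read off in parallel. Accordingly, part $(i)$ requires essentially no new work: the six observations $(i)$--$(vi)$ preceding the theorem already established, for every one of the natural ways of writing an arbitrary $\mathbb{BC}$-linear functional $h$ as a combination of $\mathbb{R}$-valued, $\mathbb{C}(i)$-valued, or $\mathbb{D}$-valued functionals, both that the associated $h_{\mathbb{D}}$ is $\mathbb{D}$-linear and that $h(x) = h_{\mathbb{D}}(x) - j h_{\mathbb{D}}(jx)$ for each $x \in X$. So my proof of $(i)$ will be a single line pointing back to those six observations.

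For $(ii)$, the plan is to check directly that $h(x) = f(x) - j f(jx)$ is $\mathbb{BC}$-linear. Additivity is inherited from that of $f$. For homogeneity, since every element of $\mathbb{D}$ is an $\mathbb{R}$-combination of $1$ and $k$, and since $j$ commutes with every hyperbolic scalar (because $kj = jk = -i$), pushing $\alpha \in \mathbb{D}$ past both copies of $j$ and applying the $\mathbb{D}$-linearity of $f$ gives $h(\alpha x) = \alpha h(x)$. It therefore suffices to verify $h(ix) = i h(x)$ and $h(jx) = j h(x)$.

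The relation $h(jx) = j h(x)$ is immediate from $j^{2} = -1$: one gets $h(jx) = f(jx) + j f(x)$, and expanding $j h(x) = j f(x) - j^{2} f(jx)$ gives the same thing. For $h(ix) = i h(x)$, I plan to compute $h(ix) = f(ix) - j f(jix) = f(ix) - j f(kx)$, then use $\mathbb{D}$-linearity of $f$ together with $jk = -i$ to reduce this to $f(ix) + i f(x)$. On the other side, $i h(x) = i f(x) - ij f(jx) = i f(x) - k f(jx)$, and the two expressions agree because $-k f(jx) = f((-k) \cdot jx) = f((-kj) x) = f(ix)$, using $-k \in \mathbb{D}$, the $\mathbb{D}$-linearity of $f$, and the identity $-kj = i$.

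The only real ``obstacle'' is bookkeeping with the units $i$, $j$, $k$ and being careful to apply $\mathbb{D}$-linearity of $f$ exactly at the steps where a hyperbolic scalar is pulled inside of $f$; once those relations are straight, the argument is completely parallel to its $i$-counterpart in the preceding theorem, so no new idea is needed.
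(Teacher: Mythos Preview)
Your proposal is correct and follows essentially the same approach as the paper: part $(i)$ is read off from the six preparatory observations, and part $(ii)$ is a direct verification of $\mathbb{BC}$-linearity via the identities $h(jx)=jh(x)$ and $h(ix)=ih(x)$. The paper itself disposes of part $(ii)$ with a one-line ``similarly as above'' referring to the $i$-version, so your explicit unit-juggling with $jk=-i$ and $-kj=i$ is, if anything, more detailed than what the paper provides.
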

\begin{corollary}
Let $X$ be a topological $\mathbb{BC}$-module. If  a   $\mathbb{BC}$-linear functional $h$   on $X$  is continuous then so is  $h_{\mathbb{D}}$; and for  every continuous $\mathbb{D}$-linear functional $f$ on $X$, there exists a unique continuous $\mathbb{BC}$-linear functional $h$ on $X$ such that  $f =h_{\mathbb{D}}$.
\end{corollary}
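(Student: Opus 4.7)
My plan is to dispatch the two assertions of the corollary using the bridge between $h$ and $h_{\mathbb{D}}$ already established in the preceding theorems of this section.

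For the first assertion, I would rewrite the defining formula $h_{\mathbb{D}}(x) = g_1(x) + k\, g_4(x)$, where $h(x) = g_1(x) + i g_2(x) + j g_3(x) + k g_4(x)$, in the closed form $h_{\mathbb{D}}(x) = \tfrac{1}{2}\bigl(h(x) + h(x)^{\dagger_3}\bigr)$. This identity holds because $\dagger_3$ fixes both $1$ and $k$ while negating $i$ and $j$. Since $\dagger_3 \colon \mathbb{BC} \to \mathbb{BC}$ is $\mathbb{R}$-linear and hence continuous, $h_{\mathbb{D}}$ is a sum of compositions of continuous maps, so continuity of $h$ yields continuity of $h_{\mathbb{D}}$.

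For the existence part of the second assertion, given a continuous $\mathbb{D}$-linear functional $f$ on $X$, I would set $h(x) = f(x) - i\, f(ix)$. The preceding theorem shows that $h$ is $\mathbb{BC}$-linear. Continuity of $h$ is clear: $f$ is continuous, the map $x \mapsto ix$ is continuous because $X$ is a topological $\mathbb{BC}$-module, and multiplication by $-i$ in $\mathbb{BC}$ is continuous. To verify $h_{\mathbb{D}} = f$, I would decompose $f(x) = a(x) + k\, b(x)$ and $f(ix) = c(x) + k\, d(x)$ with $\mathbb{R}$-valued functionals, use the relation $-ik = j$ to expand $-i\, f(ix) = -i\, c(x) + j\, d(x)$, and then read off the real-plus-$k$ part of $h(x) = a(x) - i\, c(x) + j\, d(x) + k\, b(x)$; this is precisely $a(x) + k\, b(x) = f(x)$.

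Uniqueness of $h$ follows immediately from the recovery formula established earlier: any continuous $\mathbb{BC}$-linear $h'$ with $h'_{\mathbb{D}} = f$ must satisfy $h'(x) = h'_{\mathbb{D}}(x) - i\, h'_{\mathbb{D}}(ix) = f(x) - i\, f(ix) = h(x)$, forcing $h' = h$. I do not foresee any genuine obstacle; the entire argument is a bookkeeping exercise built on the case analysis already carried out in this section, and the only topological ingredients beyond algebra are continuity of the conjugation $\dagger_3$ in the first part and of scalar multiplication by $i$ on $X$ in the second.
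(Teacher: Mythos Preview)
Your argument is correct, and since the paper offers no explicit proof (the corollary is stated as an immediate consequence of the two preceding theorems), your write-up is precisely the kind of verification the paper leaves implicit. The only small addition you make beyond the paper's setup is the closed-form identity $h_{\mathbb{D}}(x)=\tfrac12\bigl(h(x)+h(x)^{\dagger_3}\bigr)$, which is a clean way to deduce continuity of $h_{\mathbb{D}}$ without picking off real components by hand; the existence and uniqueness parts follow the recovery formula $h(x)=h_{\mathbb{D}}(x)-i\,h_{\mathbb{D}}(ix)$ exactly as the paper intends.
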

\begin{theorem}\label{Thhb5}
Let $X$ be a topological $\mathbb{BC}$-module and  $A, B$ be non-empty $\mathbb{D}$-convex subsets of $X$ such that $e_1 A \cap e_1 B = e_2 A \cap e_2 B = \emptyset.$ If $A$ is open, then there exist a continuous $\mathbb{BC}$-linear functional $h$ on $X$ and $\gamma \in \mathbb{D}$ such that  $$h_{\mathbb{D}}(a) <' \gamma \leq' h_{\mathbb{D}}(b),$$ for each $a \in A$ and $b \in B.$
\end{theorem}
\begin{proof} By Theorem \ref{Thhb4}, there exist a continuous $\mathbb{D}$-linear functional $f$ on $X$ and $\gamma \in \mathbb{D}$ such that  $$ f(a) <' \gamma \leq' f(b),$$ for each $a \in A$ and $b \in B.$ Define a functional  $h: X \longrightarrow \mathbb{BC} $ by $$ h(x) =f(x)- if(ix), \;\;\; \textmd{for each}\;  x \in X.$$ Then $h$ is a continuous $\mathbb{BC}$-linear functional on $X$ such that for each $x \in X,$  $h_{\mathbb{D}}(x)= f(x).$ It follows that  $$h_{\mathbb{D}}(a) = f(a) <' \gamma \leq' f(b) = h_{\mathbb{D}}(b),$$ for each $a \in A$ and $b \in B.$
\end{proof}
\end{section} 
\begin{section}{Another  Geometric Form of Hyperbolic Hahn Banach Theorem} 
 Hyperbolic hyperplanes have   been defined and discussed thoroughly  in  \cite [Section 8]{Hahn}. In this section, using hyperbolic hyperplanes, we present the  Hahn Banach theorem for hyperbolic modules  in  geometric form.
\begin{definition}
A subset $L$ of a $\mathbb{D}$-module $X$ is said to be a $\mathbb{D}$-linear variety if there exist a $\mathbb{D}$-submodule  $M$ of $X$ and $x_0 \in X$ such that $$L=x_0 + M.$$ 
\end{definition} 
That is, a $\mathbb{D}$-linear variety is a  translate of a $\mathbb{D}$-submodule.
\begin{definition}\cite[Definition 12]{Hahn}
A subset $L$ of a $\mathbb{D}$-module $X$ is said to be a $\mathbb{D}$-hyperplane if there exist a maximal $\mathbb{D}$-submodule  $M$ of $X$ and $x_0 \in X$ such that $$L=x_0 + M.$$ 
\end{definition}
 That is, a $\mathbb{D}$-hyperplane is a  translate of a maximal $\mathbb{D}$-submodule.
\begin{theorem}\cite[Theorem 13(1 and 2)]{Hahn}\label{Ghbt1}
 Let $X$ be a $\mathbb{D}$-module and $L \subset X$. Then the following statements are equivalent:
\begin{enumerate}
\item[(i)] L is a $\mathbb{D}$-hyperplane.
\item[(ii)] There exist a  $\mathbb{D}$-linear functional $f$ that takes at least one invertible value and  $c \in \mathbb{D}$ such that  $$L = \{x \in X \; : \; f(x)=c \}.$$
\end{enumerate}
\end{theorem}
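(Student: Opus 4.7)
The plan is to exploit the idempotent decomposition $X = e_1 X_1 + e_2 X_2$ to reduce the statement componentwise to the classical correspondence between real hyperplanes and kernels of non-zero $\mathbb{R}$-linear functionals. Here $X_l = e_l X$ is an $\mathbb{R}$-vector space; every $\mathbb{D}$-submodule $M \subset X$ splits as $M = e_1 M_1 + e_2 M_2$ with $M_l \subset X_l$, and every $\mathbb{D}$-linear functional $f : X \to \mathbb{D}$ splits as $f(x) = e_1 f_1(e_1 x) + e_2 f_2(e_2 x)$ with $f_l : X_l \to \mathbb{R}$ real-linear (parallel to the treatment of $\mathbb{D}$-linear functionals in Section~3). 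The essential interpretive point, which I expect to be the main obstacle, is that ``maximal $\mathbb{D}$-submodule'' must be read componentwise: $M$ is maximal in the relevant sense precisely when each $M_l$ is a maximal $\mathbb{R}$-subspace of $X_l$ (equivalently, $X/M \cong \mathbb{D}$ as $\mathbb{D}$-modules). Naive lattice-maximality is inadequate in the presence of zero-divisors, since one could enlarge a proper $\mathbb{D}$-submodule by replacing one idempotent component by all of $X_l$.

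For (i) $\Rightarrow$ (ii), given $L = x_0 + M$ with $M = e_1 M_1 + e_2 M_2$ a maximal $\mathbb{D}$-submodule, I would pick $y_l \in X_l \setminus M_l$ and define $f_l : X_l \to \mathbb{R}$ by $f_l(y_l) = 1$ and $f_l \equiv 0$ on $M_l$, extended by linearity via $X_l = \mathbb{R} y_l \oplus M_l$. Setting $f(x) = e_1 f_1(e_1 x) + e_2 f_2(e_2 x)$, a routine computation using $e_1 e_2 = 0$ and $e_l^2 = e_l$ shows that $f$ is $\mathbb{D}$-linear with $\ker f = M$, and
\[
f(e_1 y_1 + e_2 y_2) = e_1 + e_2 = 1
\]
is an invertible value. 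Taking $c = f(x_0)$ yields $L = \{x \in X : f(x) = c\}$.

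For (ii) $\Rightarrow$ (i), given $f$ with $f(x_\ast) = \alpha_\ast$ invertible, invertibility of $\alpha_\ast = e_1 \alpha_\ast^{(1)} + e_2 \alpha_\ast^{(2)}$ means both real components $\alpha_\ast^{(l)}$ are non-zero. Decomposing $f(x) = e_1 f_1(e_1 x) + e_2 f_2(e_2 x)$, this forces both $f_l$ to be non-zero $\mathbb{R}$-linear functionals, so each $\ker f_l$ is a maximal $\mathbb{R}$-subspace of $X_l$. Hence $M := \ker f = \ker f_1 + \ker f_2$ is a maximal $\mathbb{D}$-submodule. Taking $x_1 = \alpha_\ast^{-1} c\, x_\ast$ gives $f(x_1) = c$, so $L = x_1 + M$ exhibits $L$ as a $\mathbb{D}$-hyperplane. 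Once the correct reading of maximality is in hand, all remaining verifications are routine componentwise linear algebra carried through by the decoupling identity $e_1 e_2 = 0$.
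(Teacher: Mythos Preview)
The paper does not supply its own proof of this statement; it is quoted from \cite{Hahn} (Theorem~13) and used as a black box in Section~8. There is therefore no in-paper argument to compare against.

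Your proposal is sound and is the natural route: reduce via the idempotent decomposition to the classical real result. Your flagged concern about the meaning of ``maximal $\mathbb{D}$-submodule'' is exactly on point, and the paper implicitly resolves it in your favour: Theorem~\ref{Ghbt2} (also quoted from \cite{Hahn}) states that $L$ is a $\mathbb{D}$-hyperplane if and only if $e_1 L$ and $e_2 L$ are real hyperplanes with $L = e_1 L \oplus e_2 L$. This forces the componentwise reading of maximality you adopt (equivalently, $X/M \cong \mathbb{D}$), and rules out the degenerate lattice-maximal submodules of the form $e_1 M_1 + e_2 X_2$ that you correctly worried about. With that reading fixed, both directions go through as you describe; the construction of $f$ in (i)$\Rightarrow$(ii) and the use of invertibility to force both $f_l \neq 0$ in (ii)$\Rightarrow$(i) are routine.
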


\begin{theorem}\cite[Theorem 13(3)]{Hahn}\label{Ghbt3}
 Let $X$ be a $\mathbb{D}$-module and $L \subset X$ be a $\mathbb{D}$-hyperplane. If $f$ and $g$ are  $\mathbb{D}$-linear functionals on $X$ such that both take at least one invertible value and  $c, d \in \mathbb{D}$ with  $$L = \{x \in X \; : \; f(x)=c \}= \{x \in X \; : \; g(x)=d \}, $$ then, there exists $\gamma \in \mathbb{D}$ such that  $f = \gamma g$ and $c = \gamma d$.
\end{theorem}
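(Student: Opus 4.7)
My plan is to use the idempotent decomposition to reduce this to the classical real statement that two nonzero linear functionals with a common kernel are scalar multiples of each other. For any $\mathbb{D}$-linear functional $\phi$ on $X$ we have $\phi(x) = e_1 \phi_1(e_1 x) + e_2 \phi_2(e_2 x)$ with $\phi_j : X_j \to \mathbb{R}$ real-linear, where $X_j = e_j X$. Applying this to both $f$ and $g$, and decomposing $c = e_1 c_1 + e_2 c_2$, $d = e_1 d_1 + e_2 d_2$, the equation $f(x) = c$ splits into the two scalar conditions $f_1(e_1 x) = c_1$ and $f_2(e_2 x) = c_2$, and analogously $g(x) = d$ splits into $g_1(e_1 x) = d_1$ and $g_2(e_2 x) = d_2$. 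Consequently the hypothesis $\{x : f(x) = c\} = \{x : g(x) = d\}$ translates into the coincidence of level sets $\{y \in X_j : f_j(y) = c_j\} = \{y \in X_j : g_j(y) = d_j\}$ for $j = 1, 2$; call this common set $L_j$.

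Next, I would exploit the assumption that $f$ and $g$ each take at least one invertible value. Since $\alpha = e_1 \alpha_1 + e_2 \alpha_2 \in \mathbb{D}$ is invertible iff both $\alpha_1, \alpha_2 \neq 0$, the existence of some $x_0 \in X$ with $f(x_0)$ invertible forces $f_1$ and $f_2$ each to be nonzero real-linear functionals on $X_1, X_2$, and similarly for $g_1, g_2$. Since $L$ is nonempty (it is a translate of a maximal $\mathbb{D}$-submodule), so is each $L_j$; fix $y_0^{(j)} \in L_j$. Then $L_j - y_0^{(j)}$ equals both $\ker f_j$ and $\ker g_j$, so the two real kernels coincide.

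Finally, invoking the classical linear algebra fact that two nonzero real-linear functionals with the same kernel are proportional, I obtain constants $\gamma_j \in \mathbb{R} \setminus \{0\}$ with $f_j = \gamma_j g_j$ for $j = 1, 2$, and then $c_j = f_j(y_0^{(j)}) = \gamma_j g_j(y_0^{(j)}) = \gamma_j d_j$. Setting $\gamma := e_1 \gamma_1 + e_2 \gamma_2 \in \mathbb{D}$, the relations $e_1 e_2 = 0$ and $e_j^2 = e_j$ give immediately
$$\gamma g(x) = e_1 \gamma_1 g_1(e_1 x) + e_2 \gamma_2 g_2(e_2 x) = e_1 f_1(e_1 x) + e_2 f_2(e_2 x) = f(x),$$
and analogously $\gamma d = c$, completing the proof. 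The main subtle point, and hence the place where care is required, is verifying that the invertibility hypothesis really forces both real components of $f$ (and of $g$) to be simultaneously nontrivial; without this one cannot apply the real proportionality result in both idempotent slots, and indeed if $f_1$ were zero while $g_1$ were not, no single $\gamma \in \mathbb{D}$ could satisfy $f = \gamma g$.
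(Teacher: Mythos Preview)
The paper does not supply its own proof of this statement; it is quoted verbatim from \cite[Theorem 13(3)]{Hahn}. Your argument via the idempotent decomposition is correct and is precisely the technique the paper (and the cited source) uses throughout: reduce to the two real slots $X_1 = e_1 X$ and $X_2 = e_2 X$, apply the classical real result in each, and reassemble.

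One small point of presentation: your claim that the equality $\{x : f(x)=c\} = \{x : g(x)=d\}$ splits into $\{y \in X_j : f_j(y) = c_j\} = \{y \in X_j : g_j(y) = d_j\}$ for $j=1,2$ already \emph{uses} the invertibility hypothesis, not just the later proportionality step. To go from ``$f_1(e_1 x)=c_1$ and $f_2(e_2 x)=c_2$'' to the projected statement ``$f_1(y)=c_1$'' for an arbitrary $y \in X_1$, you must be able to complete $y$ to some $x \in L$, i.e.\ find $z \in X_2$ with $f_2(z)=c_2$; this requires $f_2 \neq 0$. So the nontriviality of both components should be established before, not after, the level-set splitting. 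This is only an ordering issue and does not affect the validity of the argument.
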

\begin{theorem}\cite[Theorem 15]{Hahn}\label{Ghbt2}
 Let $X$ be a $\mathbb{D}$-module and $L \subset X$. Then  $L$  is a $\mathbb{D}$-hyperplane if and only if  $e_1 L $ and $e_2 L $ are real hyperplanes in $e_1 X$ and $e_2 X$ respectively such that $L = e_1 L \oplus e_2 L$.
\end{theorem}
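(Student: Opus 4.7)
The plan is to use Theorem \ref{Ghbt1} as a bridge between the abstract algebraic definition of a $\mathbb{D}$-hyperplane and the idempotent-component description, so that everything reduces to facts about real hyperplanes being kernels (up to translation) of non-zero $\mathbb{R}$-linear functionals.

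For the forward direction, I assume $L$ is a $\mathbb{D}$-hyperplane and invoke Theorem \ref{Ghbt1} to obtain a $\mathbb{D}$-linear functional $f$ on $X$ that takes at least one invertible value together with $c \in \mathbb{D}$ such that $L = \{x \in X : f(x)=c\}$. I then decompose $f$ via Equation (\ref{Eqhb1}) as $f(x) = e_1 f_1(e_1 x) + e_2 f_2(e_2 x)$ with $f_l : X_l \to \mathbb{R}$ linear, and write $c = e_1 c_1 + e_2 c_2$. The key observation, and the point that needs the hypothesis that $f$ takes an invertible value, is that neither $f_1$ nor $f_2$ can be identically zero; if either were zero, every value of $f$ would lie in $e_2 \mathbb{D}$ or $e_1 \mathbb{D}$ and hence in the null cone. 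The equation $f(x) = c$ then splits component-wise as $f_1(e_1 x) = c_1$ and $f_2(e_2 x) = c_2$. Projecting $L$ by $e_1$ and using that for any $y \in e_1 X$ with $f_1(y) = c_1$ we can produce a witness $x = y + z \in L$ with $z \in e_2 X$ satisfying $f_2(z) = c_2$ (which exists because $f_2 \neq 0$ is surjective onto $\mathbb{R}$), one identifies $e_1 L = \{y \in e_1 X : f_1(y) = c_1\}$, a real hyperplane in $e_1 X$. The analogous identification of $e_2 L$ follows. The decomposition $L = e_1 L \oplus e_2 L$ drops out of the idempotent decomposition $X = e_1 X + e_2 X$ combined with the component-wise form of the equation defining $L$.

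For the converse, suppose $e_1 L$ and $e_2 L$ are real hyperplanes and $L = e_1 L \oplus e_2 L$. Each $e_l L$ has the form $\{y \in e_l X : f_l(y) = c_l\}$ for some non-zero $\mathbb{R}$-linear functional $f_l$ on $e_l X$ and some $c_l \in \mathbb{R}$. I then assemble $f : X \to \mathbb{D}$ by $f(x) = e_1 f_1(e_1 x) + e_2 f_2(e_2 x)$, which is $\mathbb{D}$-linear by construction, and set $c = e_1 c_1 + e_2 c_2$. Choosing $y_0 \in e_1 X$ and $z_0 \in e_2 X$ with $f_1(y_0) \neq 0$ and $f_2(z_0) \neq 0$, the value $f(y_0 + z_0) = e_1 f_1(y_0) + e_2 f_2(z_0)$ is invertible in $\mathbb{D}$. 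A short verification shows $L = \{x \in X : f(x) = c\}$, and Theorem \ref{Ghbt1} upgrades this to the conclusion that $L$ is a $\mathbb{D}$-hyperplane.

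The main obstacle is handling the invertibility hypothesis in Theorem \ref{Ghbt1} correctly in both directions: on the way out, one must deduce non-triviality of each real component $f_l$ from the single assumption that $f$ assumes one invertible value, and on the way in, one must produce an invertible value of the assembled $f$ from the non-vanishing of each $f_l$. Once this correspondence between invertibility of values of $\mathbb{D}$-linear functionals and simultaneous non-triviality of both real components is clear, the rest is routine bookkeeping with the idempotent decomposition and reduces to standard facts about real hyperplanes.
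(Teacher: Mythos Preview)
The paper does not give its own proof of this theorem; it is quoted verbatim from \cite[Theorem 15]{Hahn} and used only as a tool in Section~8. Hence there is no argument in the present paper to compare your proposal against.

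That said, your proposal is correct and is in fact the natural proof. The forward direction is handled cleanly: invoking Theorem~\ref{Ghbt1} to realize $L=\{x:f(x)=c\}$, using the invertible-value hypothesis to ensure neither component functional $f_l$ vanishes identically, and then showing $e_lL=\{y\in e_lX:f_l(y)=c_l\}$ by producing, for any $y$ in that level set, a preimage $y+z\in L$ via surjectivity of the other component. The decomposition $L=e_1L\oplus e_2L$ follows from the component-wise splitting of $f(x)=c$. For the converse, assembling $f$ from the two real functionals, exhibiting an invertible value $f(y_0+z_0)$, and checking $L=\{x:f(x)=c\}$ before applying Theorem~\ref{Ghbt1} is exactly right.

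One minor point of presentation: in the forward direction you should also record that $c_l$ lies in the range of $f_l$ (otherwise the level set $\{y:f_l(y)=c_l\}$ would be empty, not a hyperplane). This is automatic since $L\neq\emptyset$ (a $\mathbb{D}$-hyperplane is a translate of a submodule and hence nonempty), so any $x_0\in L$ gives $f_l(e_lx_0)=c_l$.
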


\begin{theorem}\label{Ghbt4}
 Let $X$ be a $\mathbb{D}$-module and $B \subset X$ be a $\mathbb{D}$-convex, $\mathbb{D}$-absorbing set. If $L \subset X$  is a $\mathbb{D}$-hyperplane such that $e_1 B \cap e_1  L= e_2 B \cap e_2 L= \emptyset$, then there exists a $\mathbb{D}$-linear functional $f$ such that 
 $L = \{x \in X \; : \; f(x)=1 \}$ and for each $x \in X$,   $-q_{B}(-x)\leq' f(x) \leq'  q_{B}(x)$.
\end{theorem}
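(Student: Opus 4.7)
My plan is to reduce everything to the classical real-variable geometric Hahn--Banach theorem via the idempotent decomposition $X=e_1X+e_2X$. By Theorem \ref{Ghbt2}, the $\mathbb{D}$-hyperplane $L$ decomposes as $L=e_1L\oplus e_2L$, where $L_l:=e_lL$ is a real hyperplane in the real vector space $X_l:=e_lX$. Because $B$ is $\mathbb{D}$-absorbing, the choice $\lambda=0$ in the defining condition gives $0\in B$, hence $0\in e_lB$; combined with the disjointness hypothesis this forces $0\notin L_l$, so each $L_l$ is an affine hyperplane not through the origin.

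Next I would verify the two preparatory facts needed to run the classical theorem in each $X_l$: that $B_l:=e_lB$ is real convex and real absorbing in $X_l$, and that its real Minkowski functional is precisely the $q_{e_lB}$ appearing in the formula $q_B(x)=q_{e_1B}(e_1x)e_1+q_{e_2B}(e_2x)e_2$. Real convexity transfers by taking $\mathbb{D}$-convex combinations in $B$ with real scalars $t\in[0,1]$ embedded as $e_1t+e_2t\in\mathbb{D}$, which satisfies $0\leq'e_1t+e_2t\leq'1$; real absorption transfers by applying the $\mathbb{D}$-absorbing condition to a preimage and restricting to one idempotent component. The classical geometric Hahn--Banach theorem then supplies $\mathbb{R}$-linear functionals $f_l:X_l\to\mathbb{R}$ with $L_l=\{y\in X_l:f_l(y)=1\}$ and $f_l\leq q_{e_lB}$ on $X_l$.

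Finally I assemble $f:X\to\mathbb{D}$ by $f(x):=e_1f_1(e_1x)+e_2f_2(e_2x)$. Writing a scalar $\alpha=e_1\alpha_1+e_2\alpha_2$ and using $\alpha\cdot e_lx=\alpha_l\cdot e_lx$, one checks $f$ is $\mathbb{D}$-linear. The level set $\{f=1\}$ coincides with $L$, since $f(x)=1=e_1+e_2$ iff $f_l(e_lx)=1$ for both $l$, iff $e_lx\in L_l$ for both $l$, iff $x\in L$ by Theorem \ref{Ghbt2}. The upper bound $f(x)\leq'q_B(x)$ is the componentwise statement of the inequality $f_l\leq q_{e_lB}$ produced above, and the lower bound follows by $\mathbb{D}$-linearity: $f(x)=-f(-x)\geq'-q_B(-x)$. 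I expect the main obstacle to be the bookkeeping of the middle paragraph---confirming that the real Minkowski functional of $e_lB$ in $X_l$ really is the same real functional appearing in the hyperbolic Minkowski formula, and carefully reconciling $\mathbb{D}$-absorption with real absorption; once that reconciliation is in place, the three-step strategy (decompose, apply classical Hahn--Banach, reassemble) is essentially forced by the $e_1$/$e_2$ structure of the paper.
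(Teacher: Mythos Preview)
Your proposal is correct, and the decomposition bookkeeping you flag as the main obstacle is indeed routine: the paper already records (after Definition~\ref{d4}) that $q_B(x)=q_{e_1B}(e_1x)e_1+q_{e_2B}(e_2x)e_2$ with $q_{e_lB}$ the ordinary real Minkowski functionals, so that reconciliation is available off the shelf.

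The route, however, differs from the paper's. The paper does \emph{not} invoke the real geometric Hahn--Banach as a black box in each component. Instead it works at the $\mathbb{D}$-level from the start: it uses Theorem~\ref{Ghbt1} to write $L=\{g=c\}$ for a $\mathbb{D}$-linear $g$ and some $c\in\mathbb{D}$, then uses the disjointness hypothesis componentwise only to show $c$ is invertible, sets $f=g/c$ (invoking Theorem~\ref{Ghbt3}), and finally proves $B\subset\{f<'1\}$ and $f\leq' q_B$ by a direct $\mathbb{D}$-infimum argument. In effect the paper \emph{re-proves} the real geometric lemma at the hyperbolic level rather than citing it. Your approach---decompose via Theorem~\ref{Ghbt2}, apply the classical real statement in each $X_l$, reassemble---is cleaner and more modular, and it makes transparent that the theorem is nothing more than two copies of its real ancestor glued by idempotents. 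The paper's approach is more self-contained (no external real lemma needed) and exercises the $\mathbb{D}$-Minkowski machinery directly. Either is perfectly acceptable; yours is arguably the more natural one given the paper's overall philosophy of reducing to components.
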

\begin{proof} Since $L$  is a $\mathbb{D}$-hyperplane, by Theorem \ref{Ghbt1}, there exist a $\mathbb{D}$-linear functional $g$ that takes at least one invertible value and  a hyperbolic number $c =e_1 c_1 + e_2 c_2$ such that  $$L = \{x \in X \; : \; g(x)=c \}.$$ Since $g$ is  $\mathbb{D}$-linear on $X$, there exist $\mathbb{R}$-linear functionals $g_1$ and $g_2$ on $e_1 X$ and $e_2 X$ respectively such that for each $x \in X$, we have $$ g(x)= e_1 g_1(e_1 x) + e_2 g_2 (e_2 x) .$$  Also, by Theorem \ref{Ghbt2}, $L = e_1 L \oplus e_2 L$, where  $e_1 L $ and $e_2 L $ are real hyperplanes in $e_1 X$ and $e_2 X$ respectively. It is clear that $e_1 L = \{e_1 x \in e_1 X \; : \; g_1(e_1 x)=c_1 \}$  and   $e_2 L = \{e_2 x \in e_2 X \; : \; g_2(e_2 x)=c_2 \}$.  We claim that $c$ is invertible. Suppose the contrary. Then either $c_1 =0$ or $c_2=0$. Consider the case when $c_1 =0$. Then, $0 \in e_1 L$. Also, $0 \in B$ as $B$  is $\mathbb{D}$-absorbing which yields that $0 \in e_1 B$. Therefore $ 0 \in e_1 B \cap e_1  L$, which  contradicts   the hypothesis that  $e_1 B \cap e_1  L= \emptyset$. Thus $c_1 \not=0$. Similarly, it can be shown that $c_2 \not=0$. This proves our claim that  $c$ is invertible. Take $f= g/c$. Then, by  Theorem \ref{Ghbt3}, we get   $L = \{x \in X \; : \; f(x)=1 \}.$ For each $x \in X$, write $ f(x)= e_1 f_1(e_1 x) + e_2 f_2 (e_2 x),$ where $g_1$ and $g_2$ are $\mathbb{R}$-linear functionals  on $e_1 X$ and $e_2 X$ respectively. Then, we have $e_1 L = \{e_1 x  \; : \; f_1(e_1 x)= 1 \}$  and   $e_2 L = \{e_2 x  \; : \; f_2(e_2 x)= 1 \}$. Now, $B$ is $\mathbb{D}$-convex implies that $e_1B$ is $\mathbb{R}$-convex set in $\mathbb{R}$-vector space $e_1 X$. We also have  $e_1 B \cap e_1  L= \emptyset$.  So, either $e_1 B \subset  \{e_1 x  \; : \; f_1(e_1 x) <1\}$ or $e_1 B \subset  \{e_1 x  \; : \; f_1(e_1 x) >1\}$. As $f_1(0)=0$ and $0 \in e_1 K$, so $e_1 B \subset  \{e_1 x  \; : \; f_1(e_1 x) <1\}$. In a similar fashion, we obtain $e_2 B \subset  \{e_2 x  \; : \; f_2(e_2 x) <1\}$. Now, let $x \in B$. Then,
\begin{eqnarray*}
f(x) &=&  e_1 f_1(e_1 x) + e_2 f_2 (e_2 x)\\
 &<' &  e_1 + e_2\\
 &=&  1.
\end{eqnarray*}
That is, $B \subset \{x \in X \; : \; f(x)<'1 \}$. Let $x \in X$. Then there exists $\gamma >'0$  such that $x \in \gamma B$. That is, $x/       \gamma \in B$. So, $f(x/ \gamma) <' 1$. Hence $f(x) <'   \gamma$. Taking $\mathbb{D}$-infimum over all $\gamma$ for which $x \in \gamma B$, we obtain $f(x) \leq ' q_B (x)$. Consequently for each $x \in X$, we also have $f(-x) \leq ' q_B (-x)$. Thus,  $ -q_B (-x) \leq'  -f(-x) = f(x) \leq '  q_B (x).$
\end{proof}
\begin{theorem}
 Let $X$ be a $\mathbb{D}$-module and $B \subset X$ be a $\mathbb{D}$-convex, $\mathbb{D}$-absorbing set. If $L \subset X$  is a $\mathbb{D}$-linear variety such that $e_1 B \cap e_1  L = e_2 B \cap e_2 L= \emptyset$, then there exists a $\mathbb{D}$-hyperplane $H =  \{x \in X \; : \; f(x)=1 \}$, where $f$  is a $\mathbb{D}$-linear functional on $X$ such that $L \subset H$, for each $x \in X$, $ f(x) \leq'  q_{B}(x)$ and $B \subset  \{x \in X \; : \; f(x)\leq' 1 \}.$
\end{theorem}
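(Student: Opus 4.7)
The plan is to reduce to the classical real-valued geometric Hahn--Banach theorem via the idempotent decomposition, following the strategy already used in Theorem \ref{Ghbt4}. Writing $L = x_0 + M$ for some $\mathbb{D}$-submodule $M$, we have $e_l L = e_l x_0 + e_l M$ as a real linear variety inside the $\mathbb{R}$-vector space $e_l X$ for $l = 1, 2$. Since $B$ is $\mathbb{D}$-convex and $\mathbb{D}$-absorbing, each projection $e_l B$ is an $\mathbb{R}$-convex, $\mathbb{R}$-absorbing subset of $e_l X$, and its real Minkowski functional is precisely $q_{e_l B}$, the $l$-th idempotent component of $q_B$. By hypothesis $e_l B \cap e_l L = \emptyset$ for $l = 1, 2$.

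For each $l = 1, 2$, I would then invoke the classical real geometric Hahn--Banach theorem to produce an $\mathbb{R}$-linear functional $f_l : e_l X \to \mathbb{R}$ such that $H_l := \{y \in e_l X : f_l(y) = 1\}$ contains $e_l L$, $f_l \leq q_{e_l B}$ on $e_l X$, and $f_l \leq 1$ on $e_l B$. Assemble these into a $\mathbb{D}$-linear functional $f : X \to \mathbb{D}$ by $f(x) = e_1 f_1(e_1 x) + e_2 f_2(e_2 x)$, and set $H = \{x \in X : f(x) = 1\}$. Because $f$ attains the invertible value $1$ on any point of $L$, Theorem \ref{Ghbt1} shows that $H$ is a $\mathbb{D}$-hyperplane; equivalently, $H = e_1 H_1 \oplus e_2 H_2$ is a $\mathbb{D}$-hyperplane by Theorem \ref{Ghbt2}.

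The three required conclusions then follow at once from the componentwise identifications. First, $L \subset H$ because $e_l x \in e_l L \subset H_l$ forces $f_l(e_l x) = 1$, whence $f(x) = e_1 + e_2 = 1$. Second, the inequality $f(x) \leq' q_B(x)$ (reading the $q_K$ in the statement as a typographical slip for $q_B$) is just the componentwise inequality $f_l \leq q_{e_l B}$ combined with the decomposition of $q_B$ recorded after Definition \ref{d4}. Third, $B \subset \{x : f(x) \leq' 1\}$ follows from $e_l B \subset \{y : f_l(y) \leq 1\}$ together with the componentwise definition of $\leq'$. The main obstacle I anticipate is the classical real extension step for a linear variety rather than a point: if one prefers not to cite this directly, I would derive it from Theorem \ref{Ghbt4} by translating so that $0 \in M$, passing to the quotient $e_l X / e_l M$ where the image of $e_l L$ becomes a one-dimensional affine line and the image of $e_l B$ remains convex and absorbing, applying the hyperplane case there, and pulling back to $e_l X$.
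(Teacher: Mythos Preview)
Your proof is correct but takes a genuinely different route from the paper's. You decompose componentwise via the idempotents and apply the classical real geometric Hahn--Banach theorem separately in each factor $e_l X$, then reassemble the two real functionals into a $\mathbb{D}$-linear one. The paper instead argues intrinsically at the $\mathbb{D}$-level: it passes to the $\mathbb{D}$-submodule $N = \mathrm{span}(M \cup \{x_0\})$, observes that $L$ is a $\mathbb{D}$-hyperplane in $N$ and that $q_{B\cap N}=q_B$ on $N$, invokes Theorem~\ref{Ghbt4} there to obtain a $\mathbb{D}$-linear functional $g$ on $N$ with $L=\{g=1\}$ and $g\leq' q_B$, and finally extends $g$ to all of $X$ by the hyperbolic Hahn--Banach extension theorem \cite[Theorem~5]{Hahn}. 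Your approach is more economical, collapsing the problem to the real theory in a single step; the paper's approach stays within the hyperbolic framework and exhibits the result as a direct consequence of the preceding hyperplane case plus the hyperbolic extension theorem. One minor slip in your backup plan: in the quotient $e_l X / e_l M$ the image of $e_l L = e_l x_0 + e_l M$ is the single point $[e_l x_0]$, not an affine line; the one-dimensional object you want is its linear span, on which you define the functional $\lambda[e_l x_0]\mapsto\lambda$ and then extend --- but this does not affect your primary argument.
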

\begin{proof} Let $M$ be a  $\mathbb{D}$-submodule  of $X$ and $x_0 \in X$ such that $L = x_0 + M$. Then neither $e_1 x_0 \in e_1 M$ nor  $e_2 x_0 \in e_2 M$. Because if  $e_1 x_0 \in e_1 M$, then $e_1 L = e_1 M$, so $0 \in e_1 L$. Also, $0 \in e_1 B$, as $e_1 B$ is an absorbing set in $e_1 X$, showing that $e_1 B \cap e_1  L \not= \emptyset$, which is a contradiction. Similarly, we arrive at a  contradiction for  $e_2 x_0 \in e_2 M$. Consider the set $N=\textmd{span}(M \cup \{ x_0\})$. Then $N$ is  $\mathbb{D}$-submodule  of $X$, $L \subset N$ and $M$ is a maximal $\mathbb{D}$-submodule  of $N$, and so $L$ is a $\mathbb{D}$-hyperplane in $N$.  Also, $B \cap N$ is a  $\mathbb{D}$-convex, $\mathbb{D}$-absorbing subset of $N$. Since $N$ is a $\mathbb{D}$-submodule, for each $x \in N$, we have
\begin{eqnarray*}
q_{B \cap N}(x) &=&  \inf_{\mathbb{D}} \{ \alpha > ' 0 \; : \; x \in \alpha (B \cap N)\}\\
 &=&  \inf_{\mathbb{D}} \{ \alpha > ' 0 \; : \; x \in \alpha B \cap N \}\\
 &=&  \inf_{\mathbb{D}} \{ \alpha > ' 0 \; : \; x \in \alpha B \}\\
 &=& q_{B }(x).
\end{eqnarray*}
Clearly, $(e_1 B\cap e_1  N) \cap e_1  L=(e_2 B\cap e_2  N) \cap e_2  L= \emptyset$. By Theorem \ref{Ghbt4},  there exists a $\mathbb{D}$-linear functional $g$ on $N$ such that 
 $L = \{x \in X \; : \; g(x)=1 \}$ and for each $x \in N$,   $ g(x) \leq'  q_{B}(x)$. Finally, by applying \cite [Theorem 5] {Hahn}, we obtain a  $\mathbb{D}$-linear functional $f$ on $X$ such that $ f_{|N}= g $ and  for each $x \in X$,   $ f(x) \leq'  q_{B}(x)$. Now, if $x \in B$, then $q_B (x) \leq' 1$. Thus $f(x)  \leq'  q_{B}(x) \leq' 1$, showing that $B \subset  \{x \in X \; : \; f(x)\leq' 1 \}.$  Let $H =  \{x \in X \; : \; f(x)=1 \}$. Then, $H$   is a $\mathbb{D}$-hyperplane in $X$ and  $L \subset H$. This completes the proof.
\end{proof}

\end{section}


\bibliographystyle{amsplain}

\noindent Heera Saini, \textit{Department of Mathematics, Govt. M. A. M. College, Jammu,  J\&K - 180 006, India.}\\
E-mail :\textit{ heerasainihs@gmail.com}\\

\noindent Aditi Sharma, \textit{Department of Mathematics, University of Jammu, Jammu, J\&K - 180 006, India.}\\
E-mail :\textit{aditi.sharmaro@gmail.com}\\

\noindent Romesh Kumar, \textit{Department of Mathematics, University of Jammu, Jammu, J\&K - 180 006, India.}\\
E-mail :\textit{ romesh\_jammu@yahoo.com}\\

\end{document}